\documentclass[12pt,english]{article}
\usepackage{mathptmx}

\usepackage[T1]{fontenc}
\usepackage[latin9]{inputenc}
\usepackage[letterpaper]{geometry}
\geometry{verbose,tmargin=1in,bmargin=1in,lmargin=1in,rmargin=1in}
\usepackage{float}
\usepackage{amsthm}
\usepackage{amstext}
\usepackage{graphicx}
\usepackage{amssymb}

\makeatletter
  \theoremstyle{plain}
  \newtheorem*{thm*}{Theorem}
\theoremstyle{plain}
\newtheorem{thm}{Theorem}
  \theoremstyle{definition}
  \newtheorem*{example*}{Example}
  \theoremstyle{plain}
  \newtheorem{lem}[thm]{Lemma}
  \theoremstyle{remark}
  \newtheorem*{rem*}{Remark}
  \theoremstyle{plain}
  \newtheorem{cor}[thm]{Corollary}
  \theoremstyle{remark}
  \newtheorem*{acknowledgement*}{Acknowledgement}

\@ifundefined{showcaptionsetup}{}{%
 \PassOptionsToPackage{caption=false}{subfig}}
\usepackage{subfig}
\makeatother

\usepackage{babel}

\begin{document}

\title{Bounded Mean Oscillation and Bandlimited Interpolation in the Presence
of Noise}

\author{Gaurav Thakur%
\thanks{Program in Applied and Computational Mathematics, Princeton University,
Princeton, NJ 08544, USA, email: gthakur@princeton.edu%
}}

\date{September 3, 2010}
\maketitle
\begin{abstract}
We study some problems related to the effect of bounded, additive
sample noise in the bandlimited interpolation given by the Whittaker-Shannon-Kotelnikov
(WSK) sampling formula. We establish a generalized form of the WSK
series that allows us to consider the bandlimited interpolation of
any bounded sequence at the zeros of a sine-type function. The main
result of the paper is that if the samples in this series consist
of independent, uniformly distributed random variables, then the resulting
bandlimited interpolation almost surely has a bounded global average.
In this context, we also explore the related notion of a bandlimited
function with bounded mean oscillation. We prove some properties of
such functions, and in particular, we show that they are either bounded
or have unbounded samples at any positive sampling rate. We also discuss
a few concrete examples of functions that demonstrate these properties.
\end{abstract}
\noindent {\small Mathematics Subject Classification (2010): Primary
30E05; Secondary 94A20, 30D15, 30H35}{\small \par}

\noindent {\small Keywords: Sampling theorem, Nonuniform sampling,
Paley-Wiener spaces, Entire functions of exponential type, BMO, Sine-type
functions}{\small \par}

\section{Introduction\label{SecIntro}}

The classical Whittaker-Shannon-Kotelnikov (WSK) \textit{sampling
theorem} is a central result in signal processing and forms the basis
of analog-to-digital and digital-to-analog conversion in a variety
of contexts involving signal encoding, transmission and detection.
If we normalize the Fourier transform as $\hat{f}(\omega)=\int_{-\infty}^{\infty}f(t)e^{-2\pi i\omega t}dt$,
then the sampling theorem states that a function $f\in L^{2}(\mathbb{R})$
with $\mathrm{supp}(\hat{f})\subset[-\frac{b}{2},\frac{b}{2}]$ can
be expressed as a series of the form\begin{equation}
f(t)=\sum_{k=-\infty}^{\infty}a_{k}\frac{\sin(\pi(bt-k))}{\pi(bt-k)},\label{WSKSeries}\end{equation}

\noindent where $a_{k}=f(k)$ are its samples. Conversely, for a given
collection of data $\{a_{k}\}\in l^{2}$, the series (\ref{WSKSeries})
defines a function in $L^{2}(\mathbb{R})$ with $\mathrm{supp}(\hat{f})\subset[-\frac{b}{2},\frac{b}{2}]$
called the \textit{bandlimited interpolation} of $\{a_{k}\}$. The
calculation or approximation of this series is a standard procedure
in many applications. For example, in audio processing it is used
for resampling signals at a higher rate, typically by applying a lowpass
filter to the piecewise-constant zero order hold function of the samples
\cite{CR83}. In this paper, we consider the situation of bounded
noise in the samples $a_{k}$. Building on recent work by Boche and
Mönich on related problems \cite{BM07,BM09,BM09-2}, we study some
properties of the effect of the noise on the bandlimited interpolation
$f$.\\

\noindent Before we discuss our problems, it will be convenient to
define the \textit{Paley-Wiener spaces} for $1\leq p\leq\infty$ by\[
PW_{b}^{p}=\left\{ f\in L^{p}:\mathrm{supp}(\hat{f})\subset\left[-\frac{b}{2},\frac{b}{2}\right]\right\} ,\]

\noindent where $\hat{f}$ is interpreted in the sense of tempered
distributions. Our notation $PW_{b}^{p}$ essentially follows Seip
\cite{Se04}, and is slightly different from the one used by Boche
and Mönich. Without loss of generality, we will set $b=1$ in what
follows.\\

Returning to the series (\ref{WSKSeries}), we consider corrupted
samples of the form $a_{k}=T_{k}+N_{k}$, where $T_{k}$ are the true
samples and $N_{k}$ is some form of noise, and we correspondingly
write $f(t)=T(t)+N(t)$. One obstacle we face is that the noise $\{N_{k}\}$
may not naturally decay in time alongside the signal, and even if
$\{T_{k}\}\in l^{2}$, it is often more physically meaningful to consider
$\{N_{k}\}\in l^{\infty}$. The WSK sampling theorem shows that for
any collection of samples $\{a_{k}\}\in l^{2}$, there exists a unique
function $f\in PW_{1}^{2}$ with $f(k)=a_{k}$. However, for bounded
samples $\{a_{k}\}\in l^{\infty}$, the series (\ref{WSKSeries})
does not necessarily converge. In fact, a given $\{a_{k}\}\in l^{\infty}$
may correspond to multiple functions $f\in PW_{1}^{\infty}$, or to
no such function \cite{BM07}.\\

A simple example of the former possibility (non-uniqueness) is given
by $a_{k}\equiv0$, which corresponds to the functions $f(t)\equiv0$
and $f(t)=\sin(\pi t)$. It turns out that adding one extra sample
to the collection $\{a_{k}\}$ resolves this ambiguity, and allows
us to consider the unique bandlimited interpolation of any bounded
data $\{a_{k}\}\in l^{\infty}$. We discuss the details of this procedure
in Section 3. The latter possibility (non-existence) is less obvious,
but in \cite{BM07}, Boche and Mönich presented an explicit example
of this phenomenon. They showed that for the samples given by $a_{k}=0$,
$k<1$, and $a_{k}=(-1)^{k}/\log(k+1)$, $k\geq1$, there is no $f\in PW_{1}^{\infty}$
with $f(k)=a_{k}$. It is also possible to construct other, similar
examples using standard special functions, and we describe one such
sequence of $\{a_{k}\}$ in Section \ref{SecBounded} and discuss
its properties.\\

The main observation of this paper is that such examples of $\{a_{k}\}$
are in a sense {}``highly oscillating.'' By assuming that the noise
$N_{k}$ is statistically incoherent and defining $N(t)$ carefully,
we can rule out these examples and obtain sharper statements on the
behavior of $N(t)$. More precisely, we show in Section \ref{SecRandom}
that if $N_{k}$ is a uniformly distributed, independent white noise
process, then $\sup_{r>0}\frac{1}{2r}\int_{-r}^{r}|N(t)|dt<\infty$
almost surely. In other words, the average of $|N(t)|$ is globally
bounded. We find that this result does not generally hold for $\{N_{k}\}\in l^{\infty}$
that lack such a statistical condition, and we discuss examples that
illustrate the differences.\\

We also study a second topic motivated by further understanding $N(t)$.
As discussed in \cite{Eo95}, the WSK series (\ref{WSKSeries}) can
be interpreted as a discrete Hilbert transform operator $H$, mapping
a space of samples into a space of bandlimited functions (see also
\cite{BMS09} and \cite{LS97}). The Plancherel formula shows that
$H$ maps $l^{2}$ into $PW_{1}^{2}$. In fact, $H$ also maps $l^{p}$
into $PW_{1}^{p}$ for any $1<p<\infty$, and the series (\ref{WSKSeries})
converges for any $\{a_{k}\}\in l^{p}$ \cite{Le96}. This can be
compared with the continuous Hilbert transform, and more generally
any Calderon-Zygmund singular integral operator, which maps $L^{p}$
into itself for any $1<p<\infty$. Such operators behave differently
for $p=\infty$, mapping $L^{\infty}$ into the space $BMO$ of functions
with \textit{bounded mean oscillation} \cite{St93}.\\

It is thus reasonable to expect that if we consider samples $\{a_{k}\}\in l^{\infty}$,
the {}``right'' target space for $H$ may be one of bandlimited
functions lying in the space $BMO$. However, this heuristic reasoning
turns out to be incorrect. We consider bandlimited $BMO$ functions
in Section \ref{SecBMO} and establish some of their properties. In
particular, we find that such a function $f$ is either in $L^{\infty}$
or that its samples $\{f(\frac{k}{s})\}$ are unbounded for any sampling
rate $s>0$. We exhibit a concrete example of such a function, and
study it in the context of our other results.\\

We review some existing theory on bandlimited functions and the space
$BMO$ in Section \ref{SecBG}, and discuss some preliminary results
in Section \ref{SecBounded}. The main results of the paper are presented
in Sections \ref{SecRandom} and \ref{SecBMO}. We also develop our
results for a class of general, nonuniformly spaced interpolation
points, given by zeros of \textit{sine-type functions}. The above
discussion for uniformly spaced points is a special case.

\section{Background Material\label{SecBG}}

We will write $f_{1}\lesssim f_{2}$ if the inequality $f_{1}\leq Cf_{2}$
holds for a constant $C$ independent of $f_{1}$ and $f_{2}$. We
define $f_{1}\gtrsim f_{2}$ similarly, and write $f_{1}\eqsim f_{2}$
if both $f_{1}\lesssim f_{2}$ and $f_{1}\gtrsim f_{2}$. For a set
of points $Y=\{y_{k}\}$ and an extra element $\tilde{y}$, we denote
the collection $\{y_{k}\}\bigcup\{\tilde{y}\}$ by $\tilde{Y}$, with
$||\tilde{Y}||_{l^{p}}:=(||Y||_{l^{p}}+|\tilde{y}|^{p})^{1/p}$ and
$||\tilde{Y}||_{l^{\infty}}:=\max(\left\Vert Y\right\Vert _{l^{\infty}},|\tilde{y}|)$.
These conventions will be used throughout the paper.\\

\noindent We first review a basic, alternative formulation of $PW_{b}^{p}$,
$1\leq p\leq\infty$. An entire function $f$ is said to be of \textit{exponential
type $b$} if\[
b=\inf\left(\beta:|f(z)|\leq e^{\beta|z|},z\in\mathbb{C}\right).\]
We denote this by writing $\mathrm{type}(f)=b$, and by $\mathrm{type}(f)=\infty$
if $b=\infty$ or $f$ is not entire. By the Paley-Wiener-Schwartz
theorem \cite{Ho03}, $PW_{b}^{p}$ can be equivalently described
as the space of all entire functions with $\mathrm{type}(f)\leq\pi b$
whose restrictions to $\mathbb{R}$ are in $L^{p}$. It also follows
that $PW_{b}^{p}\subset PW_{b}^{q}$ for $p<q$. Functions $f\in PW_{b}^{p}$
satisfy the classical estimates $\left\Vert f'\right\Vert _{L^{p}}\leq\pi b\left\Vert f\right\Vert _{L^{p}}$
and $\left\Vert f(\cdot+ic)\right\Vert _{L^{p}}\leq e^{\pi b|c|}\left\Vert f\right\Vert _{L^{p}}$,
respectively known as the \textit{Bernstein} and \textit{Plancherel-Polya
inequalities }\cite{Le96,Se04}\textit{.}\\

There is a rich and well-developed theory of nonuniform sampling for
functions in $PW_{b}^{p}$. We only cover a few aspects of it that
we will need in this paper, and refer to \cite{Se04} and \cite{Yo01}
for more details. We consider a sequence of points $X=\{x_{k}\}\subset\mathbb{R}$,
indexed so that $x_{k}<x_{k+1}$. The \textit{separation constant}
of $X$ is defined by $\lambda(X)=\inf_{k}|x_{k+1}-x_{k}|$, and $X$
is said to be \textit{separated} if $\lambda(X)>0$. The \textit{generating
function }of $X$ is given by the product\begin{equation}
S(z)=z^{\delta_{X}}\lim_{r\rightarrow\infty}\prod_{0<|x_{k}|<r}\left(1-\frac{z}{x_{k}}\right),\label{GenFunc}\end{equation}

\noindent where $\delta_{X}=1$ if $0\in X$ and $\delta_{X}=0$ otherwise.
For real and separated $X$, such a function $S$ is said to be \textit{sine-type}
if the following conditions hold:\\

\noindent (I) The product (\ref{GenFunc}) converges and $\mathrm{type}(S)=\pi b<\infty$.

\noindent (II) For any $\epsilon>0$, there are positive constants
$C_{1}(\epsilon)$ and $C_{2}(\epsilon)$ such that whenever $\mathrm{dist}(z,X)>\epsilon$,
\begin{equation}
C_{1}(\epsilon)\leq e^{-\pi b\mathrm{|Im(z)|}}|S(z)|\leq C_{2}(\epsilon).\label{SineType}\end{equation}

\noindent It can be shown that condition (II) is equivalent to requiring
that the bounds (\ref{SineType}) only hold in some half plane $\{z:\mathrm{|Im}(z)|\geq c\}$,
$c>0$. Furthermore, a sine-type function $S$ also satisfies the
bounds $|S'(x_{k})|\eqsim1$ and forces $X$ to satisfy $\sup_{k}|x_{k+1}-x_{k}|<\infty$
\cite{Le96}.\\

\noindent Now suppose the sequence $X=\{x_{k}\}$ has a sine-type
generating function $S$ with $\mathrm{type}(S)=\pi b$. Let $1<p<\infty$.
Then any $f\in PW_{b}^{p}$ can be expressed in terms of its samples
$a_{k}=f(x_{k})$,\begin{equation}
f(z)=\sum_{k=-\infty}^{\infty}a_{k}\frac{S(z)}{S'(x_{k})(z-x_{k})},\label{SampSeries1}\end{equation}
with uniform convergence on compact subsets of $\mathbb{C}$. Conversely,
for any $\{a_{k}\}\in l^{p}$, the series (\ref{SampSeries1}) converges
uniformly on compact subsets of $\mathbb{C}$ and defines a function
$f\in PW_{b}^{p}$ with $a_{k}=f(x_{k})$ \cite{Le96}.\\

The simplest example of a sequence $X$ with a sine-type generating
function is the uniform sequence $x_{k}=\frac{k}{b}$, for which $S(z)=\frac{\sin(\pi bz)}{\pi b}$
and the expansion (\ref{SampSeries1}) reduces to the WSK sampling
theorem. More generally, any finite union of uniform sequences has
a sine-type generating function. As a more interesting example, the
Bessel function $J_{0}$ has real, separated zeros, satisfies $J_{0}(z)=J_{0}(-z)$,
and has the asymptotic formula $J_{0}(z)=\sqrt{\frac{2}{\pi z}}\cos(z-\frac{\pi}{4})(1+O(\frac{1}{z}))$
as $|z|\to\infty$ and $|\arg z|<\pi$ (see \cite{WW27}). This implies
that for sufficiently small $\epsilon>0$, $S(z)=zJ_{0}(\frac{\pi z}{2})J_{0}(\frac{\pi(z+\epsilon)}{2})$
is a sine-type function with $\mathrm{type}(S)=\pi$. Sequences $X$
with sine-type generating functions are not the most general class
for which $f$ has an expansion of the form (\ref{SampSeries1}),
but they have several convenient properties and cover some important
cases encountered in applications, such as that of periodic interpolation
points. Such sequences $X$ and various properties of the series (\ref{SampSeries1})
have recently been studied in \cite{BM09} in a computational context.\\

The above results do not directly carry over to bounded functions
$f\in PW_{b}^{\infty}$, but in this case we still have the following
theorem \cite{Be89}.
\begin{thm*}
\label{ThmPWInf}(Beurling) For a sequence $X=\{x_{k}\}$, let $N(X,I)$
be the number of $x_{k}$ in an interval $I$. Then $\left\Vert f\right\Vert _{L^{\infty}}\eqsim\left\Vert f(X)\right\Vert _{l^{\infty}}$
for all $f\in PW_{b}^{\infty}$ if and only if\[
D^{-}(X):=\limsup_{r\to\infty}\,\inf_{a}\frac{N(X,[a,a+r))}{r}>b.\]

\end{thm*}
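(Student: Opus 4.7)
The plan is to prove the two directions of the equivalence separately. The inequality $\|f(X)\|_{l^\infty} \leq \|f\|_{L^\infty}$ is trivial, so all the content lies in the reverse estimate under the density hypothesis $D^-(X) > b$ and in showing this estimate must fail when $D^-(X) \leq b$. I would follow Beurling's original strategy, combining a counting-based construction for necessity with a balayage argument for sufficiency.

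For the necessity direction I would argue by contrapositive. Supposing $D^-(X) \leq b$, the definition of lower uniform density produces, for every $\epsilon > 0$, arbitrarily long intervals $I_n = [a_n, a_n + r_n)$ with $r_n \to \infty$ and $N(X, I_n) < (b + \epsilon) r_n$. The heuristic is that $PW_b^\infty$ restricted to a window of length $r_n$ carries roughly $b r_n$ degrees of freedom, so a near-Nyquist density of vanishing conditions should allow the construction of $f_n \in PW_b^\infty$ with unit sup-norm that is nevertheless very small on $X \cap I_n$. A concrete realization takes a polynomial factor $P_n(z) = \prod_{x_k \in I_n}(z - x_k)$, multiplies by a Fej\'er-type kernel of slightly narrower spectrum centered on $I_n$ to bring the type to $\pi b$ while preserving boundedness, and exploits the kernel's quadratic decay to keep the product small at the sample points outside $I_n$. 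Normalizing and sending $n \to \infty$ then produces a sequence witnessing failure of the norm equivalence.

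For the sufficiency direction, fix $b' \in (b, D^-(X))$ and aim to construct, for each $t \in \mathbb{R}$, a discrete measure $\mu_t = \sum_k c_k^t \delta_{x_k}$ on $X$ whose Fourier transform agrees with $e^{-2\pi i t \xi}$ on $[-b/2, b/2]$, satisfying $M := \sup_t \|\mu_t\|_{\mathrm{TV}} < \infty$. Once available, any $f \in PW_b^\infty$ satisfies $f(t) = \int f\, d\mu_t$ by inserting the Fourier representation of $f$ and using $\mathrm{supp}(\hat{f}) \subset [-b/2, b/2]$, so $\|f\|_{L^\infty} \leq M \|f(X)\|_{l^\infty}$. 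The $\mu_t$ are built by a weak-limit compactness argument on the translates $X - t$: the strict density gap $D^-(X) > b' > b$ forces every sufficiently long interval to contain at least $b' r - O(1)$ points, and all weak limits of $X - t_n$ therefore remain uniqueness sets for $PW_b^\infty$; a duality argument on the resulting compact family then yields a uniform total-variation bound.

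The main obstacle is precisely the uniform control of $\|\mu_t\|_{\mathrm{TV}}$ in the sufficiency direction. This is exactly where the \emph{strict} inequality $D^-(X) > b$ enters: at the borderline $D^-(X) = b$, weak limits of translates $X - t_n$ can collapse to critical-density uniform sequences for which any candidate balayage measures have unbounded total variation, so the gap $D^-(X) - b > 0$ is essential to obtain a translation-uniform bound. The necessity direction is technically more elementary but still requires careful Paley-Wiener-Schwartz bookkeeping to verify that the constructed $f_n$ really has sup-norm of order one on all of $\mathbb{R}$ rather than only on $I_n$.
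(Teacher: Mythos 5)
First, a point of context: the paper does not prove this statement at all --- it is quoted as Beurling's theorem with a citation to \cite{Be89} and used as background --- so there is no internal proof to compare against, and your sketch has to be judged against Beurling's actual argument. At the level of architecture you have the right picture: the inequality $\left\Vert f(X)\right\Vert _{l^{\infty}}\leq\left\Vert f\right\Vert _{L^{\infty}}$ is trivial, sufficiency goes through balayage of $\delta_{t}$ onto $X$, necessity requires exhibiting unit-norm functions that are small on $X$, and weak limits of translates of $X$ are the organizing device. However, both nontrivial directions as written contain genuine gaps rather than omitted routine details.

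The necessity direction does not reach the critical case $D^{-}(X)=b$, which is where all of the content of the strict inequality lives. From $D^{-}(X)\leq b$ you extract intervals $I_{n}$ with $N(X,I_{n})<(b+\epsilon)r_{n}$; but this permits \emph{more} than $br_{n}$ points in $I_{n}$, i.e.\ at least as many smallness conditions as the roughly $br_{n}$ degrees of freedom your own heuristic assigns to $PW_{b}^{\infty}$ on a window of length $r_{n}$, so the polynomial-times-Fej\'er construction has no reason to produce a nonzero function of type $\pi b$, sup-norm one, that is small on $X\cap I_{n}$. That construction only works when $D^{-}(X)<b$ strictly. A concrete obstruction: $X=\mathbb{Z}\cup\{\frac{1}{2}\}$ is a uniqueness set for $PW_{1}^{\infty}$ with $D^{-}(X)=1$, hence (by the theorem) not a sampling set, yet it has no under-dense long intervals whatsoever for your argument to exploit. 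Handling the borderline requires a different mechanism, e.g.\ Beurling's self-improvement step (a sampling set for type $\pi b$ is automatically one for some type $\pi b'>\pi b$) or the characterization of sampling sets as those all of whose weak limits of translates are uniqueness sets. On the sufficiency side, the assertion that $D^{-}(X)>b$ yields measures $\mu_{t}$ supported on $X$ with $\sup_{t}\left\Vert \mu_{t}\right\Vert _{TV}<\infty$ \emph{is} Beurling's balayage theorem; saying that ``a duality argument on the compact family yields a uniform total-variation bound'' restates the conclusion rather than proving it, and this is where essentially all of the work lies. Two smaller repairs you would also need: the spectral agreement $\hat{\mu}_{t}=\hat{\delta}_{t}$ must be arranged on the larger interval $[-\frac{b'}{2},\frac{b'}{2}]$, since $\hat{f}$ is only a distribution supported on the closed set $[-\frac{b}{2},\frac{b}{2}]$ and agreement on that set alone does not justify $f(t)=\int f\,d\mu_{t}$; and the passage of that identity from integrable to merely bounded bandlimited $f$ requires a regularization argument.
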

\noindent $D^{-}(X)$ is called the \textit{lower uniform density}
of $X$. For a uniform sequence $x_{k}=\frac{k}{s}$, $D^{-}(X)=s$,
and Beurling's theorem implies that $f\in PW_{b}^{\infty}$ is uniquely
determined by its samples if we oversample it beyond its Nyquist rate.\\

We finally review a few properties of the Banach space $BMO$ of functions
with bounded mean oscillation, which has been studied extensively
in connection with singular integral operators. It is defined by\[
\left\{ f:\left\Vert f\right\Vert _{BMO}=\sup_{I}\frac{1}{|I|}\int_{I}\left|f(t)-\frac{1}{|I|}\int_{I}f(s)ds\right|dt<\infty\right\} ,\]

\noindent where the supremum runs over all real intervals $I$. The
quantity $\left\Vert f\right\Vert _{BMO}$ is technically a seminorm,
since $\left\Vert f\right\Vert _{BMO}$=$\left\Vert f+c\right\Vert _{BMO}$
for any constant $c$. Now for any $g\in L^{1}$, we denote its Hilbert
transform by $\mathcal{H}g(z):=\int_{-\infty}^{\infty}\frac{g(t)}{\pi(t-z)}dt$
and its Riesz projections by $\mathcal{P}^{\pm}g:=\left(g\pm i\mathcal{H}g\right)/2$.
We can then consider the {}``real'' Hardy space $H^{1}(\mathbb{R})$,
given by\[
\left\{ f:\left\Vert f\right\Vert _{H^{1}(\mathbb{R})}=\left\Vert f\right\Vert _{L^{1}}+\left\Vert \mathcal{H}f\right\Vert _{L^{1}}<\infty\right\} .\]

\noindent Finally, it will also be useful to define the subspaces\begin{eqnarray*}
\mathbf{U}_{1} & = & \{f\in C_{0}^{\infty}:\int_{-\infty}^{\infty}f(t)dt=0\}\\
\mathbf{U}_{2} & = & \{f\in H^{1}(\mathbb{R}):(1+t^{2})|\mathcal{P}^{+}f(t)|\in L^{\infty}\}\end{eqnarray*}

\noindent which are both norm dense in $H^{1}(\mathbb{R})$ \cite{Ga07,St93}.
These spaces are all closely related, as the following theorem shows.
\begin{thm*}
\noindent \label{ThmBMO1}(Fefferman) $BMO$ is the dual space of
$H^{1}(\mathbb{R})$. More specifically, we have the inequality\[
\left\Vert f\right\Vert _{BMO}\eqsim\sup_{g\in\mathbf{U}}\frac{1}{\left\Vert g\right\Vert _{H^{1}(\mathbb{R})}}\left|\int_{-\infty}^{\infty}f(t)\overline{g(t)}dt\right|,\]
where \textup{$\mathbf{U}$} can be taken as \textup{$\mathbf{U}_{1}$}
or \textup{$\mathbf{U}_{2}$}. Conversely, for any bounded linear
functional $L$ on $H^{1}(\mathbb{R})$, there is an $f\in BMO$ with
$\left\Vert L\right\Vert \eqsim\left\Vert f\right\Vert _{BMO}$.\\

\end{thm*}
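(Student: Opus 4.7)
The plan is to establish the equivalence in two halves: first that each $f \in BMO$ induces a bounded linear functional on $H^1(\mathbb{R})$ with norm $\lesssim \|f\|_{BMO}$, and second that every bounded linear functional $L$ on $H^1(\mathbb{R})$ arises from some $f \in BMO$ with $\|f\|_{BMO} \lesssim \|L\|$. Density of $\mathbf{U}_1$ and $\mathbf{U}_2$ in $H^1(\mathbb{R})$ then lets the supremum in the statement be taken over either subspace.

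For the first half, the natural tool is the atomic decomposition of $H^1(\mathbb{R})$: every $g$ admits a representation $g = \sum_j \lambda_j a_j$, where each atom $a_j$ is supported on an interval $I_j$ with $\|a_j\|_{L^\infty} \leq 1/|I_j|$ and $\int a_j = 0$, and $\sum_j |\lambda_j| \lesssim \|g\|_{H^1(\mathbb{R})}$. For a single atom $a$ on $I$ and $f_I := |I|^{-1}\int_I f$, the mean-zero condition gives
\[
\left|\int f \bar{a}\right| = \left|\int_I (f - f_I) \bar{a}\right| \leq \|a\|_{L^\infty} \int_I |f - f_I| \leq \|f\|_{BMO},
\]
and summing over atoms yields the pairing bound.

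For the converse, I would first restrict $L$ to smooth, mean-zero test functions supported in $[-N, N]$. On this class, one can show $\|g\|_{H^1(\mathbb{R})} \leq C(N) \|g\|_{L^2}$, using the $L^2$-boundedness of $\mathcal{H}$ together with the decay of $\mathcal{H}g$ off $[-N, N]$ forced by $\int g = 0$. Riesz representation in $L^2([-N,N])$ then produces $f_N$ representing $L$ there, and these patch together modulo additive constants to yield a locally integrable $f$ with $L(g) = \int f \bar{g}$ for $g \in \mathbf{U}_1$. To bound $\|f\|_{BMO}$, fix an interval $I$, set $h_I = \mathrm{sgn}(\overline{f - f_I}) \mathbf{1}_I$, and define $g_I = h_I - (h_I)_I \mathbf{1}_I$ so that $\int g_I = 0$. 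Since $\|g_I\|_{L^\infty} \leq 2$ and $\mathrm{supp}(g_I) \subset I$, the function $g_I/(2|I|)$ is an $H^1$-atom and hence $\|g_I\|_{H^1} \lesssim |I|$ (smoothing $g_I$ if strict membership in $\mathbf{U}_1$ is required). A direct computation shows $L(g_I) = \int_I |f - f_I|$, so
\[
\int_I |f - f_I| = L(g_I) \leq \|L\|\,\|g_I\|_{H^1} \lesssim \|L\| \cdot |I|,
\]
and dividing by $|I|$ and taking the supremum gives $\|f\|_{BMO} \lesssim \|L\|$.

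The main obstacle is the patching step producing the representing $f$ from $L$: converting a functional on the nonlocal space $H^1(\mathbb{R})$ into a locally defined function requires exploiting the $L^2$-to-$H^1$ embedding on mean-zero functions of bounded support, which is precisely why the pairing integral must be interpreted on dense subspaces like $\mathbf{U}_1$ or $\mathbf{U}_2$ (whose compact support or $1/t^2$ decay of $\mathcal{P}^+ g$ tames the possibly logarithmically growing $f$) rather than on all of $H^1(\mathbb{R})$ directly.
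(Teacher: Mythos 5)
The paper does not prove this theorem: it is quoted as classical background with a pointer to Garnett and Stein, so there is no internal proof to compare yours against. Judged on its own terms, your outline is the standard textbook route --- atomic decomposition for the bound $\left|\int f\bar{g}\right|\lesssim\left\Vert f\right\Vert _{BMO}\left\Vert g\right\Vert _{H^{1}(\mathbb{R})}$, and Riesz representation on mean-zero $L^{2}$ functions of compact support followed by the sign-function test functions $g_{I}$ for the converse. The converse half is essentially complete: the embedding $\left\Vert g\right\Vert _{H^{1}(\mathbb{R})}\leq C(N)\left\Vert g\right\Vert _{L^{2}}$ for mean-zero $g$ supported in $[-N,N]$ is correct (the cancellation gives $|\mathcal{H}g(x)|\lesssim N\left\Vert g\right\Vert _{L^{1}}/x^{2}$ off $[-2N,2N]$), the patching modulo additive constants is routine, and since $L$ thereby extends continuously to all mean-zero $L^{2}$ functions of compact support you may apply it to $g_{I}$ directly, so the smoothing parenthesis is not even needed. (Minor slip: for complex-valued $f$ you want $h_{I}=\mathrm{sgn}(f-f_{I})$ rather than $\mathrm{sgn}(\overline{f-f_{I}})$, so that $(f-f_{I})\overline{h_{I}}=|f-f_{I}|$.)

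Two caveats on the forward half. First, the atomic decomposition of $H^{1}(\mathbb{R})$ is itself a theorem of depth comparable to the duality theorem, so your argument is modular rather than self-contained; that is acceptable here but should be said explicitly. Second, and more substantively, the phrase ``summing over atoms yields the pairing bound'' conceals the one genuinely delicate point of this direction: the identity $\int f\bar{g}=\sum_{j}\lambda_{j}\int f\bar{a}_{j}$ does not follow from $H^{1}$-convergence of the atomic series, because $f$ is unbounded and the pairing is not absolutely convergent on all of $H^{1}(\mathbb{R})$. You gesture at this in your closing paragraph, but the standard resolution --- prove the uniform bound for finite linear combinations of atoms, extend the resulting functional by density, and then verify separately that the extension coincides with the honest integral for $g\in\mathbf{U}_{1}$ or $\mathbf{U}_{2}$ --- needs to be carried out rather than merely acknowledged. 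With that step supplied, the proposal is correct.
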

\noindent We write $w=u+iv$ for the complex variable $w$ in what
follows. Let $\mathbb{C}^{\pm}=\{w:\pm v>0\}$ be the upper and lower
half planes, and let $P(w,t)=\frac{1}{\pi}\frac{v}{(u-t)^{2}+v^{2}}$
be the Poisson kernel on $\mathbb{C}^{+}$. Now define the square
$Q_{a,r}=\{w:a<u<a+r,0<v<r\}$. A measure $\mu$ on $\mathbb{C}^{+}$
is said to be a \textit{Carleson measure} if we have $\mathcal{N}(\mu):=\sup\left(\frac{\mu(Q_{a,r})}{r},a\in\mathbb{R},r>0\right)<\infty$.
In other words, the measure $\mu$ of any square protruding from the
real axis must be comparable to the length of its edge. The following
theorem characterizes $BMO$ in terms of such measures.
\begin{thm*}
\noindent \label{ThmBMO2}(Fefferman-Stein) Suppose $\int_{-\infty}^{\infty}\frac{|f(t)|}{t^{2}+1}dt<\infty$,
so that $P(w,\cdot)\star f$ is well-defined. Then\begin{equation}
\left\Vert f\right\Vert _{BMO}\eqsim\left[\mathcal{N}\left(v\left|\nabla_{u,v}(P(w,\cdot)\star f)\right|^{2}dudv\right)\right]^{1/2}.\label{FSThm}\end{equation}

\end{thm*}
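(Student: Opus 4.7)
The plan is to prove the two inequalities implicit in $\eqsim$ separately. Set $F = P(w,\cdot)\star f$ and $\mu = v\left|\nabla_{u,v}F\right|^{2}\, du\, dv$.

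For $\left\Vert f\right\Vert_{BMO} \gtrsim \mathcal{N}(\mu)^{1/2}$, I would fix a Carleson square $Q_{a,r}$ and perform the standard $BMO$ decomposition $f = c + g + h$, where $c$ is the mean of $f$ over an enlarged interval $I^{*} = [a-r,\,a+2r]$, $g = (f - c)\chi_{I^{*}}$ is the local part, and $h = (f - c)\chi_{\mathbb{R}\setminus I^{*}}$ is the remote part. The constant contributes nothing since $\nabla(P\star c) = 0$. For the local part, a direct Fourier-side computation gives the Plancherel identity
\[
\iint_{\mathbb{C}^{+}} v\left|\nabla(P\star g)\right|^{2}\, du\, dv \eqsim \left\Vert g\right\Vert_{L^{2}}^{2},
\]
and the John-Nirenberg inequality yields $\left\Vert g\right\Vert_{L^{2}}^{2} \lesssim |I^{*}|\cdot\left\Vert f\right\Vert_{BMO}^{2} \lesssim r\cdot\left\Vert f\right\Vert_{BMO}^{2}$. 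For the remote part, a dyadic decomposition exploiting the $|t-u|^{-2}$ decay of $\nabla_{u,v}P(w,t)$ together with $BMO$ estimates on each dyadic annulus gives $|\nabla(P\star h)(u,v)| \lesssim \left\Vert f\right\Vert_{BMO}/r$ uniformly on $Q_{a,r}$, contributing a further $r\cdot\left\Vert f\right\Vert_{BMO}^{2}$ upon integration against $v\, du\, dv$. Combining yields $\mu(Q_{a,r}) \lesssim r\cdot\left\Vert f\right\Vert_{BMO}^{2}$.

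For the reverse direction, I would use the Fefferman duality theorem above, testing the pairing $\int f\overline{g}$ against $g \in \mathbf{U}_{2}$. A Green's-type identity on $\mathbb{C}^{+}$ applied to the harmonic extensions $F = P\star f$ and $G = P\star g$ yields
\[
\int_{-\infty}^{\infty} f(t)\overline{g(t)}\, dt \eqsim \iint_{\mathbb{C}^{+}} \nabla F(u,v)\cdot\overline{\nabla G(u,v)}\, v\, du\, dv.
\]
The key step is then a tent-space duality argument: by Cauchy-Schwarz and the Carleson embedding theorem, the right-hand side is bounded by $\mathcal{N}(\mu)^{1/2}\left\Vert Sg\right\Vert_{L^{1}(\mathbb{R})}$, where $Sg$ is the Lusin area function of $g$. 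The classical area-function identification $\left\Vert Sg\right\Vert_{L^{1}(\mathbb{R})} \eqsim \left\Vert g\right\Vert_{H^{1}(\mathbb{R})}$ finishes the estimate, and taking the supremum over $\mathbf{U}_{2}$ delivers $\left\Vert f\right\Vert_{BMO} \lesssim \mathcal{N}(\mu)^{1/2}$.

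The principal obstacle is the Carleson $\Rightarrow BMO$ direction, which rests on two substantial classical ingredients: the tent-space Carleson embedding / duality, and the Lusin area function characterization of $H^{1}(\mathbb{R})$. Careful justification of the Green's identity at the boundary also requires sufficient decay at infinity, which is precisely what the condition $(1+t^{2})|\mathcal{P}^{+}g| \in L^{\infty}$ built into $\mathbf{U}_{2}$ provides, ensuring that the integration by parts closes without residual boundary contributions. Since the theorem is classical, my plan would follow the standard treatment in the Stein reference cited in the paper.
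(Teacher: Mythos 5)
The paper does not prove this statement: it is quoted as classical background (Fefferman--Stein), with the reader referred to \cite{Ga07} and \cite{St93}. Your outline is precisely the standard argument from those references --- the local/remote decomposition with the Plancherel identity and John--Nirenberg for the $BMO \Rightarrow$ Carleson direction, and Green's identity plus tent-space/Carleson duality plus the area-function characterization of $H^{1}(\mathbb{R})$ for the converse --- and its steps are correct as a proof plan, with the only point deserving extra care being the justification of the Green's identity, which requires controlling the (possibly logarithmically growing) harmonic extension of $f$ by a truncation or density argument, not just the decay of $g$.
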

\noindent A detailed discussion of $BMO$ and the significance of
these theorems can be found in \cite{Ga07} or \cite{St93}.

\section{Bandlimited Interpolation of Bounded Data\label{SecBounded}}

\noindent In this section, we establish a preliminary result showing
how adding an extra sample allows us to treat the bandlimited interpolation
of bounded data, such as the noise model discussed in Section \ref{SecIntro}.
We define

\noindent \begin{equation}
PW_{b}^{+}=\left\{ f\,\mathrm{entire}:\limsup_{r\to\infty}\int_{|z|=r}\left|z^{-2}e^{-\pi b|\mathrm{Im}(z)|}f(z)\right||dz|<\infty\right\} .\label{PWPlus}\end{equation}

\noindent The Plancherel-Polya inequality shows that $PW_{b}^{\infty}\subset PW_{b}^{+}$.
Functions in $PW_{b}^{+}$ can be expanded in the following way.
\begin{thm}
\label{ThmSampSeries2}Suppose $X=\{x_{k}\}\subset\mathbb{R}$ is
separated and has a sine-type generating function $S$ with $\mathrm{type}(S)=\pi b$,
and let \textup{$\tilde{x}\not\in X$.} If $f\in PW_{b}^{+}$and $\tilde{A}=f(\tilde{X})$,
then\begin{equation}
f(z)=\tilde{a}\frac{S(z)}{S(\tilde{x})}+\sum_{k=-\infty}^{\infty}a_{k}\lim_{z_{0}\to z}\frac{S(z_{0})}{S'(x_{k})}\left(\frac{1}{z_{0}-x_{k}}-\frac{1}{\tilde{x}-x_{k}}\right),\label{SampSeries2}\end{equation}

\noindent with uniform convergence of compact subsets of $\mathbb{C}$.
Conversely, for any $\tilde{A}\in l^{\infty}$, the series (\ref{SampSeries2})
converges uniformly on compact subsets of $\mathbb{C}$ and $f\in PW_{b}^{+}$.\end{thm}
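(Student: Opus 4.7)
The plan is to handle both assertions via a Mittag-Leffler style partial fraction expansion combined with the growth estimates afforded by the sine-type bounds on $S$. I begin with the converse, which is constructive and produces the object that the first half must identify with $f$. For $\tilde{A}\in l^\infty$ the inequalities $|S(z)|\lesssim e^{\pi b|\mathrm{Im}(z)|}$, $|S'(x_k)|\eqsim 1$, and $|x_k|\eqsim|k|$, $|\tilde{x}-x_k|\eqsim|k|$ (the last two following from $\lambda(X)>0$ together with $\sup_k|x_{k+1}-x_k|<\infty$) show that the $k$-th summand of (\ref{SampSeries2}), rewritten as $a_kS(z)(\tilde{x}-z)/[S'(x_k)(z-x_k)(\tilde{x}-x_k)]$, is of size $O(\|\tilde{A}\|_{l^\infty}/k^2)$ uniformly on compact subsets of $\mathbb{C}\setminus X$. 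Absolute uniform convergence follows, and the removable singularities absorbed by the indicated $z_0\to z$ limit make the resulting sum $T$ entire. One checks $T(\tilde{x})=\tilde{a}$ directly (every series term vanishes at $z=\tilde{x}$) and $T(x_j)=a_j$ by keeping only the $j$-th term in the limit, and inserting the termwise estimate into the defining integral of (\ref{PWPlus}) yields $T\in PW_b^+$.

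For the forward direction I reduce the identity to a uniqueness statement. Given $f\in PW_b^+$ with $\tilde{A}=f(\tilde{X})$, let $T$ be the series just constructed; by the converse, $T\in PW_b^+$ and agrees with $f$ on $\tilde{X}$, so $g:=f-T$ lies in $PW_b^+$ and vanishes on $\tilde{X}$. Since $S$ has only simple zeros, located precisely at $X$, the quotient $h:=g/S$ extends to an entire function with $h(\tilde{x})=0$. The two-sided bound $|S(z)|\eqsim e^{\pi b|\mathrm{Im}(z)|}$ on $\{\mathrm{dist}(z,X)>\epsilon\}$ converts the $PW_b^+$ condition for $g$ into $\int_{|z|=r}|z^{-2}h(z)|\,|dz|=O(1)$ along a sequence $r_n\to\infty$ of radii sitting inside this good region. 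Cauchy's bounds for the Taylor coefficients of $h$ on the circles $|z|=r_n$ upgrade this averaged estimate to polynomial growth, and a Liouville-type argument combined with $h(\tilde{x})=0$ then identifies $h$ with $0$, giving $f\equiv T$.

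The main technical obstacle is this last step. The factor $|z|^{-2}$ in (\ref{PWPlus}) only produces a bounded, rather than vanishing, $\limsup$ for $\int_{|z|=r}|z^{-2}h(z)|\,|dz|$, which a priori permits a leftover term $A(z-\tilde{x})$ in $h$. Eliminating it requires an additional step: either sharpening the contour estimates by choosing the $r_n$ so that (\ref{SineType}) is uniform and the averaged bound improves to $o(r_n)$, or directly checking that no nonzero function of the form $(z-\tilde{x})S(z)$ is compatible with the convergence mode built into the construction of $T$. Once this sublinear sharpening is in place, the remaining Liouville step and the converse half together deliver the theorem.
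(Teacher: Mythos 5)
Your architecture (construct the series $T$ first, then reduce the expansion of $f$ to the uniqueness statement $g:=f-T\equiv 0$ via the entire quotient $h=g/S$ and a Liouville argument) is a legitimate alternative to the paper's route, which instead computes $f$ directly from the residues of $\frac{1}{2\pi i}\oint_{|w|=r_n}\frac{f(w)S(z)}{S(w)}\bigl(\frac{1}{z-w}-\frac{1}{\tilde{x}-w}\bigr)\,dw$ and shows the boundary term $J(r_n)$ vanishes. But the obstacle you flag at the end is not a technicality you can engineer away by a better choice of radii: with (\ref{PWPlus}) read literally as a finite $\limsup$, the function $(z-\tilde{x})S(z)$ belongs to $PW_b^{+}$ (on $|z|=r$ its integrand is $O(1/r)$ over a contour of length $2\pi r$, so the integral stays bounded), it vanishes at every $x_k$ and at $\tilde{x}$, and it is not identically zero. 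So the affine leftover $h(z)=A(z-\tilde{x})$ is genuinely consistent with all your hypotheses; neither of your two proposed repairs can succeed, since $\int_{|z|=r}|z|^{-2}|z-\tilde{x}|\,|dz|\approx 2\pi$ for every $r$, and $T$ depends only on the samples, which do not see this term. The fix is to strengthen the membership condition to the integral in (\ref{PWPlus}) tending to $0$ (equivalently, to note that this is what the paper's own proof uses when it asserts $J(r_n)\to 0$; the stronger condition still contains $PW_b^{\infty}$ and the logarithmically growing interpolants produced by the converse, since for those the circle integral is $O(r^{-1}\log r)$). Under that reading your Cauchy estimates give $|c_m|\lesssim r_n^{1-m}\cdot o(1)$, killing every Taylor coefficient of $h$ of order $m\geq 1$, and $h(\tilde{x})=0$ then kills the constant, closing the argument.

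Two smaller points. First, in the converse your termwise bound $O(\lVert\tilde{A}\rVert_{l^\infty}/k^2)$ holds with a constant depending on the compact set, so it gives uniform convergence on compacta but not, by itself, membership in $PW_b^{+}$: for that you need a global growth estimate in $z$, and the paper's Riemann-sum comparison producing $|f(z)|\lesssim\lVert\tilde{A}\rVert_{l^\infty}e^{\pi b|\mathrm{Im}(z)|}(1+\log_{+}|z|)$ is exactly the missing step; it is routine but should be carried out. Second, in passing from the $PW_b^{+}$ bound on $g$ to the circle estimate for $h=g/S$ you need the entire circles $|z|=r_n$, not just the points $\pm r_n$, to stay $\epsilon$-separated from $X$; since $X$ is real and separated this is arranged by choosing $r_n$ in the middles of gaps of $X$ near $\pm r_n$, but it deserves a sentence.
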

\begin{proof}
\noindent We use a standard complex variable argument. Assume $z$
is in a closed ball $B$ with $z\not\in X$, and choose a real sequence
$\{r_{n}\}$ with $r_{n}\to\infty$ and $\mathrm{dist}(\{r_{n}\},X)>0$.
We can then consider the integral\[
J(r_{n}):=\frac{1}{2\pi i}\int_{|w|=r_{n}}\frac{f(w)S(z)}{S(w)}\left(\frac{1}{z-w}-\frac{1}{\tilde{x}-w}\right)|dw|.\]
For sufficiently large $n$, it can be seen by calculating residues
that\[
J(r_{n})=-f(z)+\tilde{a}\frac{S(z)}{S(\tilde{x})}+\sum_{|x_{k}|<r_{n}}a_{k}\frac{S(z)}{S'(x_{k})}\left(\frac{1}{z-x_{k}}-\frac{1}{\tilde{x}-x_{k}}\right).\]
The inequalities (\ref{SineType}) and (\ref{PWPlus}) imply that
as $r_{n}\to\infty$,\[
|J(r_{n})|\lesssim\max_{z\in B}|S(z)(z-\tilde{x})|\int_{|w|=r_{n}}\frac{|f(w)|e^{-\pi b|\mathrm{Im}(w)|}}{|w|^{2}}|dw|\to0.\]
By letting $z\to x_{k}$ for each $x_{k}\in B$, we obtain the formula
(\ref{SampSeries2}) for all $z\in B$. For the other direction of
Theorem \ref{ThmSampSeries2}, we note that $S$ has simple zeros
at exactly $X$, so for $z\in\mathbb{R}$, $|S(z)|\leq2||S'||_{L^{\infty}}\mathrm{dist}(z,X)$.
The Bernstein and Plancherel-Polya inequalities then show that for
$z\in\mathbb{\mathbb{C}}$ and $d=\sup_{k}|x_{k+1}-x_{k}|<\infty$,\[
|S(z)|\lesssim\left\Vert S\right\Vert _{L^{\infty}}\min(\mathrm{dist}(z,X),d)e^{\pi b|\mathrm{Im}(z)|}.\]
Now define the sets:\begin{eqnarray*}
I_{1}^{w} & = & (\lfloor\mathrm{Re}(w)\rfloor-\min(1/2,\lambda(X)),\lfloor\mathrm{Re}(w)\rfloor+\min(1/2,\lambda(X)))\\
I_{2} & = & \left(-\infty,\lfloor(\mathrm{Re}(z)+\tilde{x})/2\rfloor\right)\backslash(I_{1}^{z}\bigcup I_{1}^{\tilde{x}})\\
I_{3} & = & \left(\lfloor(\mathrm{Re}(z)+\tilde{x})/2\rfloor+1,\infty\right)\backslash(I_{1}^{z}\bigcup I_{1}^{\tilde{x}})\end{eqnarray*}
Using the separation of $X$ along with basic properties of lower
Riemann sums, we have\begin{eqnarray}
|f(z)| & \lesssim & |\tilde{a}S(z)|+e^{\pi b|\mathrm{Im}(z)|}\left\Vert A\right\Vert _{l^{\infty}}\sum_{k=-\infty}^{\infty}\frac{\min(\mathrm{dist}(z,X),d)|z-\tilde{x}|}{|z-x_{k}||\tilde{x}-x_{k}|}\nonumber \\
 & \lesssim & \left\Vert \tilde{A}\right\Vert _{l^{\infty}}e^{\pi b|\mathrm{Im}(z)|}\left(1+\sum_{k\in\mathbb{Z}\bigcap I_{2}}\frac{|x_{k+1}-x_{k}||z-\tilde{x}|}{\lambda(X)|z-x_{k}||\tilde{x}-x_{k}|}+\sum_{k\in\mathbb{Z}\bigcap I_{3}}\frac{|x_{k}-x_{k-1}||z-\tilde{x}|}{\lambda(X)|z-x_{k}||\tilde{x}-x_{k}|}\right)\nonumber \\
 & \lesssim & \left\Vert \tilde{A}\right\Vert _{l^{\infty}}e^{\pi b|\mathrm{Im}(z)|}\left(1+\int_{\mathbb{R}\backslash(I_{1}^{z}\bigcup I_{1}^{\tilde{x}})}\frac{|z-\tilde{x}|}{|z-t||\tilde{x}-t|}dt\right)\nonumber \\
 & \lesssim & \left\Vert \tilde{A}\right\Vert _{l^{\infty}}e^{\pi b|\mathrm{Im}(z)|}(1+\max(\log|z|,0)),\label{LogBound}\end{eqnarray}
which implies that $f\in PW_{b}^{+}$.
\end{proof}
\noindent This expansion can be compared with the series (\ref{SampSeries1}).
It is essentially a nonuniform version of the classical Valiron interpolation
formula considered in \cite{BM07}, in which the derivative of $f$
at a point is used instead of the extra sample $\tilde{a}$, but the
form considered here will be more convenient for our purposes. We
also mention that the extra point $\tilde{x}$ plays no special role
in the collection $\tilde{X}$, and we isolate it mainly for notational
convenience. If we pick any point $x_{j}\in X$ and let $y_{k}=x_{k}$
for $k\not=j$, $y_{j}=\tilde{x}$ and $\tilde{y}=x_{j}$, then $\tilde{Y}=\{y_{k}\}\bigcup\tilde{y}$
satisfies the conditions of Theorem \ref{ThmSampSeries2} too.\\

\noindent For any $\tilde{A}\in l^{\infty}$, we call the function
$f$ given by (\ref{SampSeries2}) the \textit{bandlimited interpolation}
of $\tilde{A}$ at $\tilde{X}$. Note that for any given $\tilde{a}_{2}$
and $\tilde{x}_{2}\not\in X$, if $g$ is the bandlimited interpolation
of $A\bigcup\{\tilde{a}_{2}\}$ at $X\bigcup\{\tilde{x}_{2}\}$, then
$g(z)=f(z)+cS(z)$ for some constant $c$. Moreover, if $A\in l^{2}$,
then for any given $\tilde{x}\not\in X$ we can always choose $\tilde{a}$
so that $f$ coincides with the series (\ref{SampSeries1}), or in
the special case of uniformly spaced points $X=\{\frac{k}{b}\}$,
the usual bandlimited interpolation given by the WSK series (\ref{WSKSeries}).\\

We discuss an example of a $PW_{1}^{+}$ function that illustrates
many of the typical properties of the series (\ref{SampSeries2}).
We use the uniform samples $X=\{k\}$ and denote $\psi(z)=\frac{\Gamma'(z)}{\Gamma(z)}$,
where $\Gamma$ is the usual gamma function. The properties of $\psi$
are discussed in depth in \cite{WW27}.
\begin{example*}
\noindent \label{ExG1}The function $G_{1}(z)=\sin(\pi z)\psi(-z)$
is in $PW_{1}^{+}\backslash PW_{1}^{\infty}$ and satisfies $a_{k}=0$
for $k<0$ and $a_{k}=(-1)^{k}\pi$ for $k\geq0$.
\end{example*}
\noindent The function $\psi$ satisfies the estimate\begin{equation}
\lim_{|z|\to\infty,|\arg z|<\pi}\frac{\psi(z)}{\log z}=1,\label{PsiFormula1}\end{equation}

\noindent so $G_{1}$ is not bounded. With $A=\{a_{k}\}$ given as
above, Theorem \ref{ThmSampSeries2} shows that for any $\tilde{x}$
and $\tilde{a}$, the (unique) bandlimited interpolation of $\tilde{A}$
at $\tilde{X}$ is of the form $G_{1}(z)+c\sin(\pi z)$. It follows
that the samples $A$ have no bandlimited interpolation in $PW_{1}^{\infty}$.\\

\noindent It will be instructive to isolate one property of $G_{1}$
here. A classical formula of Gauss (\cite{WW27}, p. 240) shows that
for integer $k>0$,\begin{equation}
G_{1}(k-\frac{1}{2})=G_{1}(-k-\frac{1}{2})=(-1)^{k}\left(\sum_{m=1}^{k-1}\frac{1}{m}+\sum_{m=k}^{2k-1}\frac{2}{m}+C\right),\label{PsiFormula2}\end{equation}
so as $z\to\infty$, $|G_{1}(z)|$ grows logarithmically in between
the integer samples. The same applies as $z\to-\infty$, even though
the samples at $k<0$ are all zero. This can be interpreted as a \textit{nonlocal
effect}, where the sustained growth of $|G_{1}|$ on the positive
real axis, caused by the {}``bad behavior'' of the samples at $k>0$,
induces growth on the negative real axis too. This property can be
seen in the graph of $G_{1}$ in Figure \ref{FigG1}. It is also present
in the bandlimited interpolation of Boche and Mönich's example $a_{k}=0$,
$k<1$, and $a_{k}=(-1)^{k}/\log(k+1)$, $k\geq1$, where we take
$\tilde{x}=\frac{1}{2}$ and $\tilde{a}=0$.

\begin{figure}[H]
\subfloat{\includegraphics[scale=0.75]{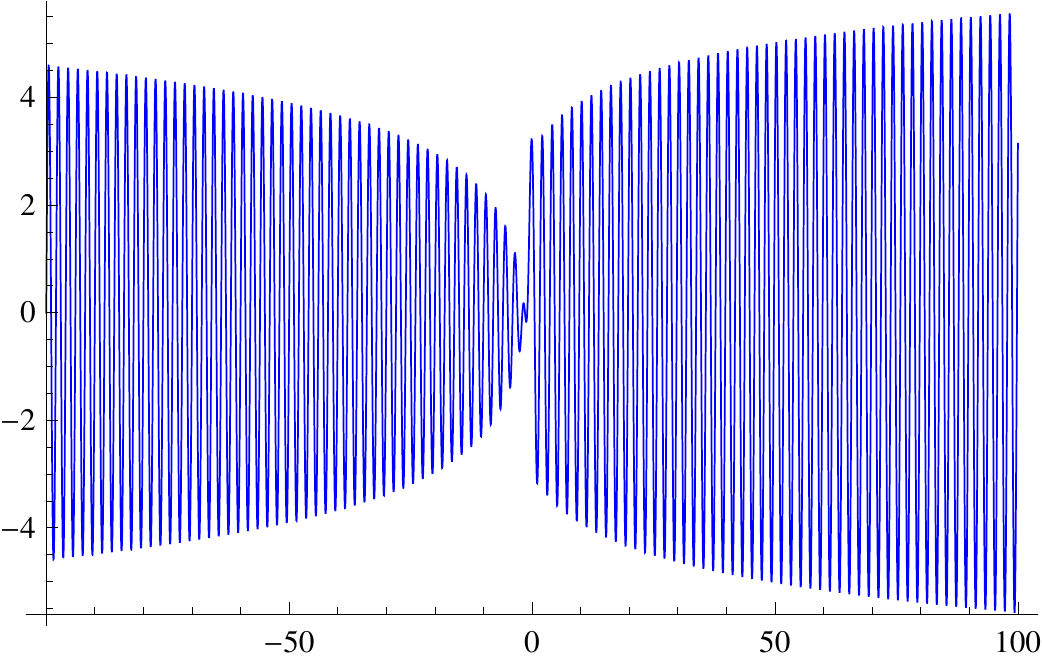}}~~~~~\subfloat{\includegraphics[scale=0.75]{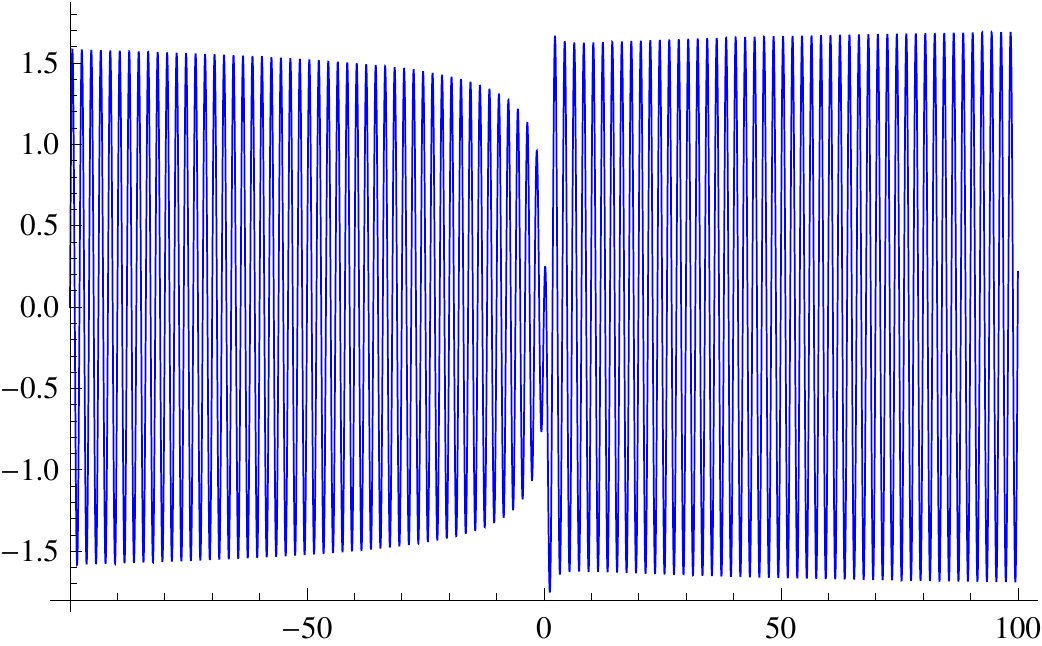}}

\caption{\label{FigG1}Left: The function $G_{1}(z)$. Right: The bandlimited
interpolation of Boche and Mönich's sequence.}

\end{figure}

\section{Bandlimited Interpolation of Random Data\label{SecRandom}}

We can now state the main result of this paper.
\begin{thm}
\label{ThmRandom}Suppose $X\subset\mathbb{R}$ is separated and has
a sine-type generating function $S$ with $\mathrm{type}(S)\leq\pi b$,
and let $\tilde{x}\not\in X$. Suppose also that $\tilde{A}=\{a_{k}\}\bigcup\tilde{a}$
is a collection of i.i.d. random variables uniformly distributed in
$[-\alpha,\alpha]$. Let $f$ be the bandlimited interpolation of
$\tilde{A}$ at $\tilde{X}$. Then almost surely,\begin{equation}
\sup_{r>0}\frac{1}{2r}\int_{-r}^{r}|f(t)|dt<\infty.\label{Average}\end{equation}

\end{thm}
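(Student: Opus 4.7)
The strategy is to use independence and the zero mean of the samples to show $E[|f(t)|^2]$ is bounded uniformly in $t$, and then upgrade this second-moment estimate to the almost sure uniform bound on $M_r := (2r)^{-1}\int_{-r}^r |f(t)|\,dt$. The deterministic bound (\ref{LogBound}) only yields $M_r \lesssim 1 + \log r$, so randomness is essential for cancelling the logarithm. Writing the series (\ref{SampSeries2}) as
\[
f(t) = \tilde a\,\frac{S(t)}{S(\tilde x)} + \sum_k a_k\,\phi_k(t), \qquad \phi_k(t) := \frac{S(t)}{S'(x_k)}\left(\frac{1}{t-x_k} - \frac{1}{\tilde x - x_k}\right),
\]
independence and $E[a_k]=0$ give $E[|f(t)|^2] = \frac{\alpha^2}{3}\bigl(|S(t)/S(\tilde x)|^2 + \sum_k |\phi_k(t)|^2\bigr)$. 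Combining the sine-type bounds (\ref{SineType}), $|S'(x_k)| \eqsim 1$, and separation of $X$, one has $|S(t)/(S'(x_k)(t-x_k))| \lesssim 1/\max(1,|t-x_k|)$ and $\sum_k 1/(\tilde x - x_k)^2 < \infty$, which combine to give $\sum_k |\phi_k(t)|^2 \leq C$ uniformly in $t$. Hence $E[|f(t)|^2] \leq C$ and Fubini plus Cauchy--Schwarz give $E[M_r] \leq \sqrt{C}$ uniformly in $r$.

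To reduce to a cleaner object, I would split $f = W\,S + u$ using partial fractions in each $\phi_k$, where
\[
W := \frac{\tilde a}{S(\tilde x)} - \sum_k \frac{a_k}{S'(x_k)(\tilde x - x_k)}, \qquad u(t) := \sum_k \frac{a_k\,S(t)}{S'(x_k)(t-x_k)}.
\]
The scalar $W$ has finite variance by separation of $X$, so $|W| < \infty$ almost surely, and $W\,S$ contributes at most $|W|\,\|S\|_{L^\infty}$ to $M_r$. This reduces the theorem to showing $\sup_r (2r)^{-1}\int_{-r}^r |u(t)|\,dt < \infty$ a.s.

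In the uniform case $X = \{k\}$ this has a clean ergodic-theoretic resolution: then $u(t) = \sum_k a_k\,\mathrm{sinc}(t-k)$, and reindexing the i.i.d. sequence $(a_k)$ shows that $u(\cdot + 1)$ and $u(\cdot)$ have the same distribution as processes, so $|u|$ is integer-shift-stationary. The shift on the i.i.d. product is ergodic, and Birkhoff's theorem applied to $\omega \mapsto \int_0^1 |u_\omega(t)|\,dt$ gives $(2N)^{-1}\int_{-N}^N |u(t)|\,dt \to E\bigl[\int_0^1 |u|\bigr] < \infty$ almost surely, with monotonicity of $r \mapsto \int_{-r}^r |u|$ handling non-integer $r$.

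The main obstacle is carrying out this last step for a general sine-type $X$, where exact shift-stationarity of $u$ is lost. Here I would proceed via a variance argument: write $\int_{-r}^r |u|^2\,dt = \sum_{k,j} a_k a_j\,D^r_{kj}$ with $D^r_{kj} := \int_{-r}^r \psi_k\psi_j$ and $\psi_k(t) := S(t)/(S'(x_k)(t-x_k))$, and use the Riesz basis property of $\{\psi_k\}$ in $PW_b^2$ together with off-diagonal decay of its Gram matrix to obtain $\mathrm{Var}\bigl((1/r)\int_{-r}^r |u|^2\,dt\bigr) = o(1)$. Chebyshev together with Borel--Cantelli over dyadic $r_n = 2^n$, and monotonicity interpolation, then yield $\sup_r (1/r)\int_{-r}^r |u|^2\,dt < \infty$ a.s., and a final Cauchy--Schwarz converts this $L^2$ bound to the desired $L^1$ bound. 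Establishing sharp enough Gram matrix decay and controlling the boundary terms in the truncated integrals is where I expect most of the technical work to lie.
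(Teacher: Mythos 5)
Your approach is correct in outline but genuinely different from the paper's. The paper writes $f=f_1+f_2+\tilde aS/S(\tilde x)$ with each $f_i$ having zero samples on one half-line, uses Kolmogorov's three-series theorem to show each piece is a.s.\ bounded on its zero-sample side, and then proves two \emph{deterministic} lemmas: Fefferman duality gives $\|f(\cdot+ic)/S(\cdot+ic)\|_{BMO}\lesssim\|A\|_{l^\infty}$ for arbitrary bounded samples, and a Poisson-integral argument shows that one-sided boundedness together with this $BMO$ bound forces the bounded global average. Randomness enters only through the a.s.\ convergence of a single numerical series --- essentially your scalar $W$, so your splitting $f=WS+u$ is playing the same role as the paper's Lemma \ref{ThmRandP1}. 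From there the routes diverge: you stay probabilistic, controlling $\frac{1}{2r}\int_{-r}^r|u|^2$ by second- and fourth-moment bounds (ergodic theorem in the uniform case, Chebyshev plus Borel--Cantelli along dyadic radii in general), whereas the paper converts to a purely deterministic transference via $BMO$. Your version is more elementary (no Hardy-space duality, no Poisson integrals), proves the stronger a.s.\ bound on the quadratic averages $\frac{1}{2r}\int_{-r}^r|f|^2$, and in the uniform case even identifies the limit of the averages; the paper's version isolates deterministic sufficient conditions for the bounded average, which is what allows the contrast with the counterexample $G_1$ and ties into the $BMO$ material of Section \ref{SecBMO}. Your general-case variance step is only sketched, but the deferred estimates do go through: from $|\psi_k(t)|\lesssim\min(1,|t-x_k|^{-1})$ and separation one gets $|D^r_{kj}|\lesssim\log(2+|x_k-x_j|)/(1+|x_k-x_j|)$ for the bulk indices and straightforward tail bounds for $|x_k|>2r$, yielding $\sum_{k,j}(D^r_{kj})^2=O(r)$ and hence $\mathrm{Var}\bigl(\frac{1}{2r}\int_{-r}^r|u|^2\bigr)=O(1/r)$, which is summable along $r_n=2^n$ as you need. (One caveat: the fourth-moment/variance computation uses $\mathrm{Var}(a_k^2)<\infty$, so unlike the ergodic route it implicitly requires fourth moments --- harmless here since the $a_k$ are bounded.)
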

We make a few comments before proving Theorem \ref{ThmRandom}. This
result deals with the same situation discussed in Section \ref{SecIntro},
even though it has been formulated slightly differently. In the notation
of Section \ref{SecIntro}, we can take $T_{k}$ to be zero by linearity
and only consider the noise $N_{k}$. As we saw in Section \ref{SecBounded},
the extra sample $\tilde{a}$ can be taken as deterministic and changed
arbitrarily without affecting the result of Theorem \ref{ThmRandom}.
The exact probability distribution of $\tilde{A}$ is also of little
significance here, and the result holds more generally for any symmetric,
finitely supported distribution.\\

\noindent We split the proof of Theorem \ref{ThmRandom} into three
lemmas for clarity. Our approach is to write the function $f$ as
the sum of two parts, each with only zero samples in one direction
along the real axis, and show that each one is almost surely bounded
on that side. This shows directly that the nonlocal effect discussed
in Section \ref{SecBounded} does not occur. We then move to the deterministic
setting and show that this one-sided boundedness forces a certain
regularity upon the other side, resulting in the function having a
bounded global average.\\

\noindent For the rest of this section, we assume that $\tilde{X}$
and $S$ are as given in Theorem \ref{ThmRandom}, without repeating
the conditions on them every time.
\begin{lem}
\label{ThmRandP1}For $k$ such that $x_{k}>0$, let $\{a_{k}\}$
be a collection of i.i.d. random variables uniformly distributed in
$[-\alpha,\alpha]$, let $a_{k}=0$ for all other $k$ and let $\tilde{a}=0$.
Suppose $f$ is the bandlimited interpolation of $\tilde{A}$ at $\tilde{X}$.
Then \textup{$\sup_{t<0}|f(t)|<\infty$} almost surely.\end{lem}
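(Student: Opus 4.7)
The plan is to exploit the structure of Theorem~\ref{ThmSampSeries2} to split $f$ into a product of a bounded deterministic factor with a difference of two random series, one of which is constant in $t$ and the other of which becomes small on average as $|t|\to\infty$. Since $\tilde{a}=0$ and $a_k=0$ whenever $x_k\leq 0$, Theorem~\ref{ThmSampSeries2} gives
\[ f(t) = S(t) \sum_{k:\,x_k>0} a_k c_k\left(\frac{1}{t-x_k} - \frac{1}{\tilde{x}-x_k}\right),\qquad c_k = \frac{1}{S'(x_k)}, \]
with $|c_k|\eqsim 1$ by the sine-type bounds, and $x_k\gtrsim k$ as $k\to\infty$ since the zero counting function of $S$ satisfies $n(r)\eqsim br$. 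I would split this as $f(t)=S(t)[g(t)-\beta]$ with $g(t)=\sum a_k c_k/(t-x_k)$ and $\beta=\sum a_k c_k/(\tilde{x}-x_k)$. Since $\sum\mathrm{var}(a_k c_k/(\tilde{x}-x_k))\lesssim\sum 1/x_k^2<\infty$, Kolmogorov's convergence theorem shows $\beta$ is almost surely a finite random constant; the sine-type bounds also give $\|S\|_{L^\infty(\mathbb{R})}<\infty$. Hence $|f(t)|\leq\|S\|_\infty(|g(t)|+|\beta|)$, and the lemma reduces to proving $\sup_{t<0}|g(t)|<\infty$ almost surely.

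For this estimate, I would carry out a dyadic decomposition $(-\infty,-1]=\bigcup_{n\geq 1}I_n$ with $I_n=[-2^n,-2^{n-1}]$, and combine a deterministic oscillation bound on $I_n$ with a probabilistic bound at one reference point of $I_n$. The deterministic derivative estimate
\[ |g'(t)|\leq\alpha\sum_k |c_k|/(t-x_k)^2 \lesssim 1/|t|\qquad(t<0) \]
yields $\mathrm{osc}(g,I_n)\leq\int_{-2^n}^{-2^{n-1}}C/|t|\,dt=C\log 2$, uniformly in $n$. At the reference point $t_n:=-2^{n-1}$, $g(t_n)$ is a sum of independent, bounded, mean-zero random variables whose squared coefficients satisfy $\sum(c_k/(t_n-x_k))^2\lesssim 1/|t_n|=2^{-(n-1)}$, so Hoeffding's inequality yields $P(|g(t_n)|>1)\lesssim\exp(-c\,2^n)$. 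These probabilities are summable, so Borel--Cantelli gives $|g(t_n)|\leq 1$ for all but finitely many $n$ almost surely, and combined with the oscillation bound this forces $\sup_{I_n}|g|\leq 2$ eventually. The finitely many remaining $I_n$, together with the compact set $[-1,0]$, contribute only finite amounts almost surely, since on such sets $|g'|$ is deterministically bounded via the absolutely convergent sum $\sum 1/x_k^2$.

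The principal technical obstacle is to justify that $g$ is well-defined as a continuous function on the entire half-line $(-\infty,0)$, rather than merely as a collection of almost-surely-defined pointwise values whose null exceptional sets depend on $t$. I would handle this by first establishing convergence of the defining series for $g$ simultaneously at all $t$ in a countable dense subset of $(-\infty,0)$ via Kolmogorov's theorem, and then extending by continuity using the uniform $L^\infty$ bound on $|g'|$ on compact subsets of $(-\infty,0)$ that comes from the absolute convergence of $\sum 1/x_k^2$; equivalently, one may invoke a Kahane-type theorem for $C(K)$-valued series of independent bounded variables. Once this issue is dispatched, the remaining estimates above are routine.
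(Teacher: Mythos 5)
Your argument is correct, and it shares its first half with the paper's proof but then diverges in an essential way. Both proofs isolate the random constant $\beta=\sum_k a_k/(S'(x_k)(\tilde{x}-x_k))$ and establish its almost sure convergence via Kolmogorov's theorem using $\sum_k 1/x_k^2<\infty$. The paper, however, never splits the interpolation series into two separately (conditionally) convergent pieces: it keeps the combined kernel $\frac{1}{t-x_k}-\frac{1}{\tilde{x}-x_k}$, which is absolutely and locally uniformly summable for each fixed $t$, and then observes by a soft deterministic (Abelian) argument that the convergence of $\beta$ forces $g(t)=f(t)/S(t)$ to have a finite limit as $t\to-\infty$; boundedness on $(-\infty,0]$ then follows from continuity alone, with no further probabilistic input. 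You instead attack $\sup_{t<0}\left|\sum_k a_kc_k/(t-x_k)\right|$ directly with a second probabilistic layer: a dyadic chaining argument combining the deterministic oscillation bound $|g'(t)|\lesssim 1/|t|$ with Hoeffding's inequality and Borel--Cantelli at the dyadic reference points. This is heavier machinery, and it obliges you to resolve the simultaneous-convergence issue for the conditionally convergent series (your fix via a.s. convergence at one point plus locally uniform absolute convergence of the differentiated series is sound, though you could sidestep the issue entirely by writing $g(t)=f(t)/S(t)+\beta$ with the combined series). In exchange, your route yields quantitative, exponentially decaying tail bounds on $\sup_{I_n}|g|$ and makes no use of the existence of a limit at $-\infty$, so it adapts more readily to noise models where the Abelian limit argument is less natural; the paper's route is shorter and uses only the three-series theorem plus continuity. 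Minor points: the final bound on each dyadic block should read $1+C\log 2$ rather than $2$ unless you shrink the Hoeffding threshold, and the exceptional blocks need the (routine) remark that a countable intersection of the almost sure events $\sup_{I_n}|g|<\infty$ still has full measure; neither affects correctness.
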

\begin{proof}
We can assume that $x_{0}=\min(x_{k}:x_{k}>0)$ and $\tilde{x}>0$,
as the general case follows from the remarks after Theorem \ref{ThmSampSeries2}.
Let $b_{k}=\frac{a_{k}}{S'(x_{k})(\tilde{x}-x_{k})}$. Then we have\[
\sum_{k=0}^{\infty}E\left(b_{k}\right)=0\]
and the separation property shows that for some constant $d$,\begin{eqnarray*}
\sum_{k=0}^{\infty}\mathrm{var}\left(b_{k}\right) & = & \frac{\alpha^{2}}{3}\sum_{k=0}^{\infty}\frac{1}{S'(x_{k})^{2}(\tilde{x}-x_{k})^{2}}\\
 & \lesssim & \frac{\alpha^{2}}{3}\sum_{k=0}^{\infty}\frac{1}{(\mathrm{dist}(\tilde{x},X)+\lambda(X)|k-d|)^{2}}\\
 & < & \infty.\end{eqnarray*}
By Kolmogorov's three-series theorem, $\sum_{k=0}^{\infty}b_{k}$
converges almost surely. Now let\[
g(t)=\frac{f(t)}{S(t)}=\sum_{k=0}^{\infty}\frac{a_{k}}{S'(x_{k})}\left(\frac{1}{t-x_{k}}-\frac{1}{\tilde{x}-x_{k}}\right).\]
It is easy to check that if $\sum_{k=0}^{\infty}b_{k}$ converges,
then $\lim_{t\to-\infty}g(t)=\sum_{k=0}^{\infty}b_{k}$. Since $|g(0)|<\infty$,
it follows by continuity that $\sup_{t<0}|g(t)|<\infty$ almost surely.
We also have $\sup_{t<0}|f(t)|\lesssim\sup_{t<0}|g(t)|$, which proves
the lemma.\end{proof}
\begin{lem}
\label{ThmRandP2}For any $\tilde{A}\in l^{\infty}$, let $f$ be
the bandlimited interpolation of $\tilde{A}$ at $\tilde{X}$. Then
for each $c>0$,\[
\left\Vert \frac{f(\cdot+ic)}{S(\cdot+ic)}\right\Vert _{BMO}\lesssim\left\Vert A\right\Vert _{l^{\infty}}.\]
\end{lem}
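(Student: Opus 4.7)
The plan is to invoke Fefferman's duality theorem with test space $\mathbf{U}_{1}$: it suffices to show
\[
\left|\int_{\mathbb{R}} g(t)\,\overline{h(t)}\,dt\right| \lesssim \|A\|_{l^{\infty}}\,\|h\|_{H^{1}(\mathbb{R})}
\]
for every $h\in\mathbf{U}_{1}$, where $g(t):=f(t+ic)/S(t+ic)$. Once this is established, the definition of $\|\cdot\|_{BMO}$ via $H^{1}$-duality yields the conclusion of the lemma.

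First I would substitute the representation from Theorem \ref{ThmSampSeries2},
\[
g(t) = \frac{\tilde{a}}{S(\tilde{x})} + \sum_{k}\frac{a_{k}}{S'(x_{k})}\left(\frac{1}{t+ic-x_{k}}-\frac{1}{\tilde{x}-x_{k}}\right).
\]
The log-type growth bound (\ref{LogBound}) proved in Theorem \ref{ThmSampSeries2} gives uniform convergence of the partial sums on $\mathrm{supp}(h)$, which, combined with the compact support of $h$, legitimizes interchanging the sum and the integral against $\bar{h}$. Since $\int h=0$, every constant term in the expansion (namely $\tilde{a}/S(\tilde{x})$ and each $a_{k}/[S'(x_{k})(\tilde{x}-x_{k})]$) is annihilated, reducing the problem to estimating
\[
\Sigma(h) := \sum_{k}\frac{a_{k}}{S'(x_{k})}\int_{\mathbb{R}}\frac{\overline{h(t)}}{t-(x_{k}-ic)}\,dt.
\]

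Each inner integral equals $2\pi i\,F(x_{k}-ic)$, where $F(z):=\frac{1}{2\pi i}\int_{\mathbb{R}}\overline{h(t)}/(t-z)\,dt$ is the Cauchy integral of $\bar{h}$; for $z\in\mathbb{C}^{-}$ the function $F$ coincides with the holomorphic extension of $-\mathcal{P}^{-}\bar{h}$ into the lower half plane, so $F|_{\mathbb{C}^{-}}\in H^{1}(\mathbb{C}^{-})$ with $\|F\|_{H^{1}(\mathbb{C}^{-})}\lesssim\|h\|_{H^{1}(\mathbb{R})}$. Using the standard sine-type estimate $|S'(x_{k})|\eqsim 1$, this yields $|\Sigma(h)|\lesssim\|A\|_{l^{\infty}}\sum_{k}|F(x_{k}-ic)|$.

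The final step is a Carleson embedding. The measure $\mu:=\sum_{k}\delta_{x_{k}-ic}$ sits on the horizontal line $\{\mathrm{Im}\,w=-c\}$ in $\mathbb{C}^{-}$, and since $\{x_{k}\}$ is separated, a Carleson square of edge $r$ is disjoint from the support of $\mu$ when $r<c$ and meets it in at most $O(r/\lambda(X))$ points when $r\geq c$; hence $\mathcal{N}(\mu)<\infty$. The $H^{1}$ Carleson embedding theorem then gives
\[
\sum_{k}|F(x_{k}-ic)| = \int|F|\,d\mu \lesssim \|F\|_{H^{1}(\mathbb{C}^{-})} \lesssim \|h\|_{H^{1}(\mathbb{R})},
\]
which closes the estimate. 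The main obstacle lies in the two $H^{1}$-level ingredients---identifying $F|_{\mathbb{C}^{-}}$ as an $H^{1}(\mathbb{C}^{-})$ function with norm controlled by $\|h\|_{H^{1}(\mathbb{R})}$, and invoking the Carleson embedding theorem at the endpoint $p=1$; the rest of the argument, including the interchange of sum and integral, is routine given (\ref{LogBound}).
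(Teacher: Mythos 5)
Your proposal is correct and follows essentially the same route as the paper: duality against $\mathbf{U}_{1}$, interchange of sum and integral justified by compact support and uniform convergence, annihilation of the constant terms by $\int h=0$, identification of the remaining inner integrals with values of the lower Riesz projection of $h$ at the points $x_{k}-ic$, and then the bound $\sum_{k}|\mathcal{P}^{-}h(x_{k}-ic)|\lesssim\Vert h\Vert_{H^{1}(\mathbb{R})}$. The paper cites this last step as an elementary Hardy-space property of separated sequences (Levin, p.~138), which is precisely the $p=1$ Carleson embedding you invoke, so the two arguments coincide in substance.
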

\begin{proof}
Applying Fefferman's duality theorem to the series (\ref{SampSeries2})
gives\begin{eqnarray*}
\left\Vert \frac{f(\cdot+ic)}{S(\cdot+ic)}\right\Vert _{BMO} & \lesssim & \sup_{h\in\mathbf{U}_{1}}\frac{1}{\left\Vert h\right\Vert _{H^{1}(\mathbb{R})}}\left|\int_{-\infty}^{\infty}\sum_{k=-\infty}^{\infty}\frac{a_{k}h(z)}{S'(x_{k})}\left(\frac{1}{z+ic-x_{k}}-\frac{1}{\tilde{x}-x_{k}}\right)dz\right|.\end{eqnarray*}

\noindent Since $h$ is finitely supported and the series (\ref{SampSeries2})
converges uniformly on compact sets, we can interchange the order
of summation and integration. $\mathcal{P}^{+}h$ and $\mathcal{P}^{-}h$
are in $L^{1}$, so by analyticity we have\begin{eqnarray*}
\left\Vert \frac{f(\cdot+ic)}{S(\cdot+ic)}\right\Vert _{BMO} & \lesssim & \sup_{h\in\mathbf{U}_{1}}\frac{1}{\left\Vert h\right\Vert _{H^{1}(\mathbb{R})}}\left|\sum_{k=-\infty}^{\infty}\frac{a_{k}}{S'(x_{k})}\int_{-\infty}^{\infty}\left(\frac{\mathcal{P}^{+}h(z)+\mathcal{P}^{-}h(z)}{z+ic-x_{k}}-\frac{h(z)}{\tilde{x}-x_{k}}\right)dz\right|\\
 & = & \sup_{h\in\mathbf{U}_{1}}\left|\frac{2\pi i}{\left\Vert h\right\Vert _{H^{1}(\mathbb{R})}}\sum_{k=-\infty}^{\infty}\frac{a_{k}\mathcal{P}^{-}h(x_{k}-ic)}{S'(x_{k})}\right|\\
 & \lesssim & \left\Vert A\right\Vert _{l^{\infty}}\sup_{h\in\mathbf{U}_{1}}\frac{1}{\left\Vert h\right\Vert _{H^{1}(\mathbb{R})}}\sum_{k=-\infty}^{\infty}\left|\mathcal{P}^{-}h(x_{k}-ic)\right|.\end{eqnarray*}
Since $X$ is separated, an elementary property of Hardy spaces (\cite{Le96},
p. 138) is that\[
\sum_{k=-\infty}^{\infty}\left|\mathcal{P}^{-}h(x_{k}-ic)\right|\lesssim\left\Vert \mathcal{P}^{-}h\right\Vert _{L^{1}}\leq\left\Vert h\right\Vert _{H^{1}(\mathbb{R})},\]
which completes the proof.\end{proof}
\begin{lem}
\label{ThmRandP3}For any $\tilde{A}\in l^{\infty}$, let $f$ be
the bandlimited interpolation of $\tilde{A}$ at $\tilde{X}$. Suppose
that $\sup_{t<0}|f(t)|<\infty$ and for some $c>0$, $\frac{f(\cdot+ic)}{S(\cdot+ic)}\in BMO$.
Then $\sup_{r>0}\frac{1}{2r}\int_{-r}^{r}|f(t)|dt<\infty$.\end{lem}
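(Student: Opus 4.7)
The plan is to reduce to a one-sided bound and then combine the two hypotheses via a contour integral on the strip $\{0<\mathrm{Im}(w)<c\}$. First, since $\sup_{t<0}|f(t)|=M<\infty$, we have $\int_{-r}^{0}|f(t)|\,dt\leq Mr$, contributing at most $M/2$ to $\frac{1}{2r}\int_{-r}^{r}|f(t)|\,dt$. It therefore suffices to establish $\int_{0}^{r}|f(t)|\,dt=O(r)$ uniformly in $r>0$.

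To prove this, I would exploit the logarithmic growth bound $|f(z)|\lesssim e^{\pi b|\mathrm{Im}(z)|}(1+\max(\log|z|,0))$ from the estimate (\ref{LogBound}) in Theorem \ref{ThmSampSeries2}. Applying Cauchy's theorem to the rectangle $[-R,R]\times[0,c]$ with a twice-subtracted kernel $(u-w_{1})/[(w-u)(w-w_{1})]$, where $w_{1}$ is a fixed reference point with $0<\mathrm{Im}(w_{1})<c$, and then letting $R\to\infty$ (the $O(1/R^{2})$ decay of the kernel beats the logarithmic growth of $|f|$ on the vertical sides) yields a strip representation of the form
\[
f(u)-f(w_{1})=\frac{u-w_{1}}{2\pi i}\,\mathrm{PV}\int_{-\infty}^{\infty}\frac{f(t)\,dt}{(t-u)(t-w_{1})}-\frac{u-w_{1}}{2\pi i}\int_{-\infty}^{\infty}\frac{f(t+ic)\,dt}{(t+ic-u)(t+ic-w_{1})}
\]
for $u\in\mathbb{R}$.

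Both integrals are absolutely convergent because the kernels decay like $1/t^{2}$. The second integral is controlled by the BMO hypothesis: since $|S(u+ic)|\eqsim e^{\pi b c}$ is bounded above and below on the line $\mathrm{Im}(w)=c$, the BMO bound on $F(\cdot+ic)=f(\cdot+ic)/S(\cdot+ic)$ implies $|f(\cdot+ic)|/(1+t^{2})\in L^{1}$ (the standard integrability condition satisfied by any $BMO$ function). Integrating $|f(u)|$ over $u\in[0,r]$ and using Fubini, the kernel decay combined with this $BMO$-tail bound yields an $O(r)$ contribution from this term. The principal-value integral splits at $0$: the $(-\infty,0]$ piece is bounded uniformly in $u\in[0,r]$ by a constant multiple of $M$ (hence contributes $O(r)$ after integration), while the $(0,\infty)$ piece is essentially a modified Hilbert transform applied to $f\chi_{(0,\infty)}$.

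The main obstacle is closing the resulting self-referential bound, since $\int_{0}^{r}|f|$ reappears via the modified Hilbert transform term. The standard Hilbert transform fails to map $L^{1}$ to $L^{1}$, so a naive estimate would degenerate; the improved $1/t^{2}$ decay of the doubly-subtracted kernel (rather than the usual Cauchy kernel's $1/t$) is what makes the argument viable. A truncation and bootstrap argument should absorb the self-referential term into the $O(r)$ bound established for the other pieces, yielding the desired linear bound and completing the proof.
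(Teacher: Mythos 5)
The reduction in your first paragraph is fine, but the core of your argument does not close, and you say so yourself: the ``truncation and bootstrap'' that is supposed to absorb the self-referential Hilbert-transform term is exactly the missing step, and I do not see how it can be supplied. After your strip representation, $\int_{0}^{r}|f(u)|\,du$ is bounded by $O(r)$ plus a term of the shape $\int_{0}^{r}\bigl|\mathrm{PV}\int_{0}^{\infty}f(t)K(u,t)\,dt\bigr|\,du$, and this term carries no small constant; a bootstrap can only absorb a self-referential quantity that reappears with a coefficient strictly less than $1$, and the (modified) Hilbert transform gives you no such contraction. There is also a quantitative problem with the line-$ic$ term: the factor $|u-w_{1}|$ in your doubly-subtracted kernel grows like $r$ over $u\in[0,r]$, so the tail contribution behaves like $r^{2}\int_{|t|>2r}|f(t+ic)|t^{-2}\,dt$, and bounding \emph{that} by $O(r)$ requires controlling $\frac{1}{r}\int_{-r}^{r}|f(t+ic)|\,dt$ --- which is essentially the quantity you are trying to prove is bounded. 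The mere integrability $\int|f(t+ic)|(1+t^{2})^{-1}\,dt<\infty$ is not enough.

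The missing idea is a different, and much more elementary, use of the $BMO$ hypothesis. For $g\in BMO$ the averages of $|g|$ over the two halves $[-r,0]$ and $[0,r]$ of a symmetric interval each differ from the average over $[-r,r]$ by at most $2\left\Vert g\right\Vert _{BMO}$ (write $\int_{0}^{r}|g|-\int_{-r}^{0}|g|$ as a difference of deviations from the mean of $g$ over $[-r,r]$ and use the triangle inequality). Hence one-sided boundedness of $g$ on $(-\infty,0)$ plus $g\in BMO$ immediately gives $\sup_{r}\frac{1}{2r}\int_{-r}^{r}|g|<\infty$; no singular-integral identity is needed. What remains is bookkeeping between the real line and the line $\mathrm{Im}(w)=c$, since the one-sided bound lives on $\mathbb{R}$ while the $BMO$ bound lives on $\mathbb{R}+ic$: the paper first pushes the one-sided bound up to height $c$ by applying the Poisson reproducing formula to $f(z)e^{\pi biz}$ (the half of the Poisson integral over $(0,\infty)$ is handled by $\int_{0}^{\infty}|f(s)|(s^{2}+1)^{-1}ds<\infty$, which follows from the logarithmic growth estimate of Theorem \ref{ThmSampSeries2}), then applies the averaging trick to $g=f(\cdot+ic)/S(\cdot+ic)$ there (using $|S(\cdot+ic)|\eqsim e^{\pi bc}$, as you noted), and finally brings the averaged bound back down to $\mathbb{R}$ with a second Poisson integral applied to $f(z)e^{-\pi biz}$. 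Your proposal correctly identifies all the ingredients (the one-sided bound, the sine-type bounds on $|S|$ at height $c$, the $(1+t^{2})^{-1}$ integrability) but routes them through an identity that leaves an unabsorbable term, rather than through the averaging property of $BMO$ that makes the lemma work.
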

\begin{proof}
We assume $c=1$ without loss of generality. Let $f^{\pm}(z)=f(z)e^{\pm\pi biz}$,
$g(z)=\frac{f(z+i)}{S(z+i)}$, $M_{1}=\sup_{t<0}|f(t)|$ and $M_{2}=\sup_{t<0}|g(t)|$.
The estimate (\ref{LogBound}) implies that $\int_{-\infty}^{\infty}\frac{|f(t)|}{t^{2}+1}<\infty$,
so $|f^{+}|$ has a harmonic majorant on the upper half plane (see
\cite{Ga07}) and the reproducing formula $f^{+}(z)=P(z,\cdot)\star f^{+}$
holds for $\mathrm{Im}(z)>0$. We can then estimate\begin{eqnarray*}
\sup_{t<0}|f^{+}(t+i)| & \leq & \sup_{t<0}\left(M_{1}\int_{-\infty}^{0}P(t+i,s)ds+\int_{0}^{\infty}|f(s)|P(t+i,s)ds\right)\\
 & \leq & \left(\frac{M_{1}}{2}+\frac{1}{\pi}\int_{0}^{\infty}\frac{|f(s)|}{s^{2}+1}ds\right).\end{eqnarray*}

\noindent This shows that $M_{2}<\infty$. Now for any fixed $r>0$,\begin{eqnarray*}
\frac{1}{2r}\int_{-r}^{r}|f(t+i)|dt & \lesssim & \frac{1}{2r}\int_{-r}^{r}|g(t)|dt\\
 & \leq & \frac{1}{2r}\left(\int_{0}^{r}|g(t)|dt-\int_{-r}^{0}|g(t)|dt\right)+M_{2}\\
 & \leq & \frac{1}{2r}\left(\left|\int_{0}^{r}|g(t)|dt-\int_{-r}^{r}|g(s)|ds\right|+\left|\int_{-r}^{0}|g(t)|dt-\int_{-r}^{r}|g(s)|ds\right|\right)+M_{2}\\
 & \leq & \frac{1}{r}\int_{-r}^{r}\left|g(t)-\frac{1}{2r}\int_{-r}^{r}g(s)ds\right|dt+M_{2}\\
 & \leq & 2\left\Vert g\right\Vert _{BMO}+M_{2}.\end{eqnarray*}

\noindent We finally use a Poisson integral again to move back to
the real line. For $\mathrm{Im}(z)<1$, we have $f^{-}(z)=P(z-i,\cdot)\star f^{-}(\cdot+i)$.
This gives\begin{eqnarray*}
\frac{1}{2r}\int_{-r}^{r}|f(t)|dt & \leq & e^{\pi b}\frac{1}{2r}\int_{-r}^{r}\int_{-\infty}^{\infty}\frac{|f(s+i)|}{(t-s)^{2}+1}dsdt\\
 & = & e^{\pi b}\frac{1}{2\pi r}\int_{-\infty}^{\infty}|f(s+i)|\left(\arctan\left(r+s\right)+\arctan(r-s)\right)ds\\
 & \leq & e^{\pi b}\left(\frac{1}{2r}\int_{-2r}^{2r}|f(s+i)|ds+2\int_{\mathbb{R}\backslash[-2r,2r]}\frac{|f(s+i)|}{s^{2}+1}ds\right).\end{eqnarray*}

\noindent Taking the estimate (\ref{LogBound}) into account again,
we conclude that\[
\sup_{r>0}\frac{1}{2r}\int_{-r}^{r}|f(t)|dt<\infty.\]

\end{proof}
\noindent We can now combine these lemmas to complete the proof.
\begin{proof}[Proof of Theorem \ref{ThmRandom}]
\noindent  For any $\tilde{A}\in l^{\infty}$, we can write the bandlimited
interpolation $f$ of $\tilde{A}$ at $\tilde{X}$ as $f(z)=f_{1}(z)+f_{2}(z)+\frac{\tilde{a}S(z)}{S(\tilde{x})}$,
where $f_{1}(x_{k})=0$ for $x_{k}<0$ and $f_{2}(x_{k})=0$ for $x_{k}\geq0$.
Applying Lemmas \ref{ThmRandP1}-\ref{ThmRandP3} on $f_{1}(z)$ and
$f_{2}(-z)$ and noting that $S\in L^{\infty}$ finishes the proof.
\end{proof}
The statistical incoherence in the samples $\tilde{A}$ in Theorem
\ref{ThmRandom} is the reason we have the bounded average property
(\ref{Average}), and it does not generally hold for bounded samples
$\tilde{A}$. As an illustration of this, we return to the example
function $G_{1}$ from Section \ref{SecBounded} and show that the
average of $|G_{1}(t)|$ is unbounded. It suffices to consider $t<0$.
Let $T$ be the tent function\begin{equation}
T(t)=\left\{ \begin{array}{cc}
2t & 0<t\leq\frac{1}{2}\\
2-2t & \frac{1}{2}<t\leq1\\
0 & \mathrm{otherwise}\end{array}\right..\label{Tent}\end{equation}

\noindent It is clear that $|\sin(\pi t)|\geq\sum_{n=-\infty}^{\infty}T(t+n)$,
and the formula (\ref{PsiFormula1}) implies that $|\psi(t)|\geq\frac{1}{2}\log|t|$
for sufficiently large $t$. This shows that\[
|G_{1}(t)|\geq\sum_{n=2}^{\infty}\frac{1}{2}\log(n)T(t+n).\]

\noindent It follows that as $r\to\infty$, $\frac{1}{r}\int_{-r}^{0}|G_{1}(t)|dt\gtrsim\log r\to\infty$.\\

\noindent Figure \ref{FigG2} below shows an example of the bandlimited
interpolation of random data. In the notation of Theorem \ref{ThmRandom},
we use a realization of $\tilde{A}$ with $\alpha=\frac{1}{2}$, and
take $x_{k}=k$ and $\tilde{x}=\frac{1}{2}$. We denote the resulting
function by $G_{2}$. The graphs in Figure \ref{FigG2} can be compared
with the functions shown in Figure \ref{FigG1} in Section \ref{SecBounded}.
Unlike those functions, it can be seen that $G_{2}$ does not steadily
grow over long time intervals. Intuitively, this shows how the effect
of noisy samples on the bandlimited interpolation is in a sense well-controlled.\\

\begin{figure}[H]
\subfloat{\includegraphics[scale=0.75]{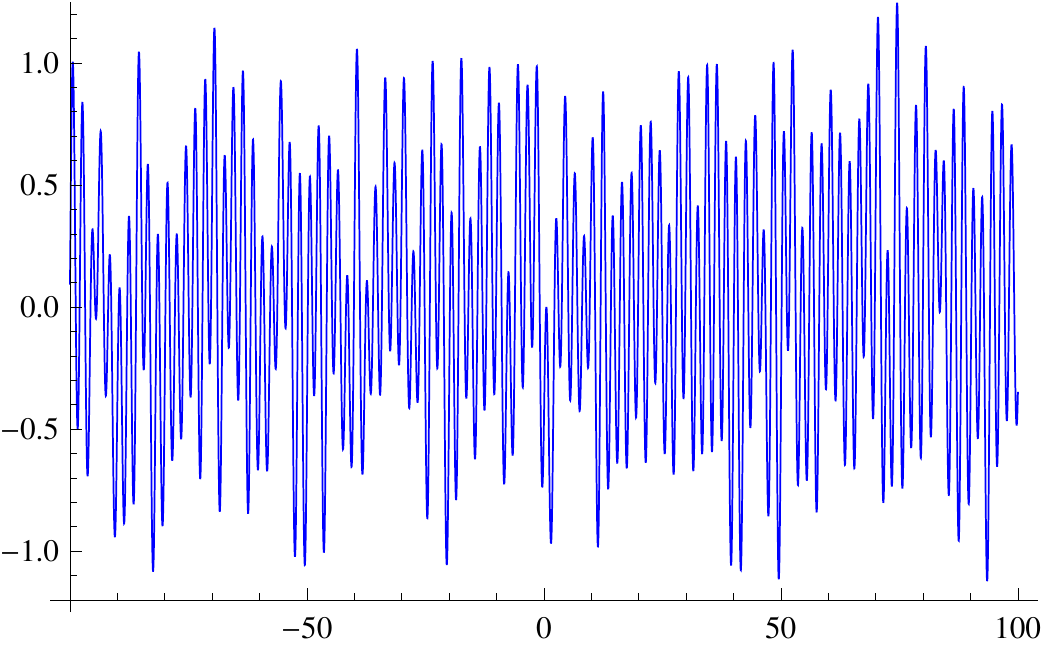}}~~~~~\subfloat{\includegraphics[scale=0.75]{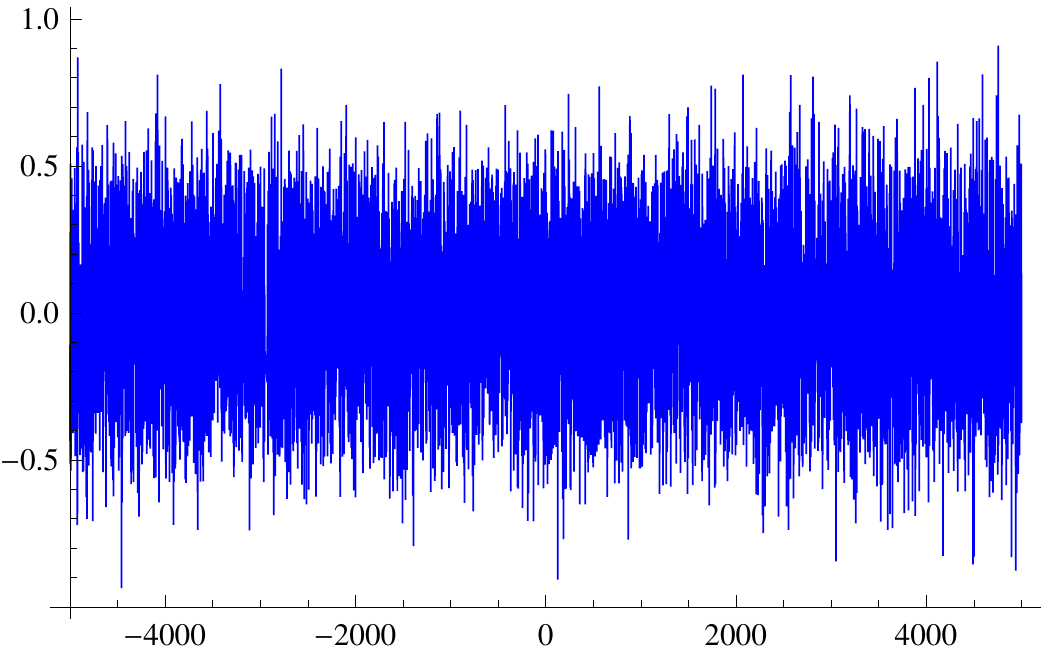}}

\caption{\label{FigG2}Left: The function $G_{2}(z)$ on $[-100,100]$. Right:
$G_{2}(z)$ on $[-5000,5000]$.}

\end{figure}

\section{Bandlimited BMO Functions\label{SecBMO}}

In this section, we study some properties of bandlimited functions
in the space $BMO$. Such functions have a somewhat different character
than the examples we have seen so far. We fix a point $c$ and define
the space $PW_{b}^{\star}$ to be the following.\[
PW_{b}^{\star}=\{f:\mathrm{type}(f)\leq\pi b,\left\Vert f\right\Vert _{BMO,c}:=|f(c)|+\left\Vert f\right\Vert _{BMO}<\infty\}\]

\noindent The term $|f(c)|$ resolves the ambiguity in the $BMO$
seminorm for constant functions, and $\left\Vert f\right\Vert _{BMO,c}$
is a (full) norm. It will be shown below that the precise value of
$c$ is unimportant and that changing it gives an equivalent norm.
Since $f\in BMO$ always satisfies $\int_{-\infty}^{\infty}\frac{|f(t)|}{t^{2}+1}dt<\infty$
\cite{Ga07}, the Paley-Wiener-Schwartz theorem implies that $PW_{b}^{\star}\subset PW_{b}^{+}$.
We first give a version of the Plancherel-Polya inequality for $PW_{b}^{\star}$.
\begin{lem}
\label{ThmPPBMO}If $f\in PW_{b}^{\text{\ensuremath{\star}}}$, then
$\left\Vert f(\cdot+ic)\right\Vert _{BMO}\leq\left\Vert f\right\Vert _{BMO}e^{\pi b|c|}$.\end{lem}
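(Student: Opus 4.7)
The plan is to derive the inequality by combining a BMO version of Bernstein's inequality with the Taylor expansion of $f$ around a real point. Since $f$ is entire, $f(t+ic)=\sum_{n=0}^{\infty}\frac{(ic)^{n}}{n!}f^{(n)}(t)$ for every $t\in\mathbb{R}$, so it suffices to prove the intermediate estimate $\|f^{(n)}\|_{BMO}\leq(\pi b)^{n}\|f\|_{BMO}$ for all $n\geq0$ and then sum term by term in BMO.

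The first step is a BMO Bernstein inequality $\|f'\|_{BMO}\leq\pi b\|f\|_{BMO}$. I would invoke the classical M.\ Riesz interpolation formula
\[f'(t)=\frac{b}{\pi}\sum_{k\in\mathbb{Z}}\frac{(-1)^{k}}{(k+1/2)^{2}}\,f\!\left(t+\frac{k+1/2}{b}\right),\]
which expresses $f'(t)$ as an absolutely convergent linear combination of translates of $f$ whose coefficients have total absolute value $\sum_{k}\frac{b}{\pi(k+1/2)^{2}}=\pi b$. Since the BMO seminorm is translation invariant, for any interval $I\subset\mathbb{R}$ (writing $s_{k}=(k+1/2)/b$) the exchange of the sum with the oscillation integral gives
\[\frac{1}{|I|}\int_{I}|f'(t)-(f')_{I}|\,dt\leq\sum_{k}\frac{b}{\pi(k+1/2)^{2}}\|f(\cdot+s_{k})\|_{BMO}=\pi b\|f\|_{BMO}.\]
Iterating this bound on $f^{(n-1)}$ (which is again entire of type $\leq\pi b$ and in the same class) yields $\|f^{(n)}\|_{BMO}\leq(\pi b)^{n}\|f\|_{BMO}$ for all $n\geq0$.

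The second step is to apply the triangle inequality for the BMO seminorm term by term to the Taylor expansion of $f(t+ic)$, giving
\[\|f(\cdot+ic)\|_{BMO}\leq\sum_{n=0}^{\infty}\frac{|c|^{n}}{n!}\|f^{(n)}\|_{BMO}\leq\|f\|_{BMO}\sum_{n=0}^{\infty}\frac{(\pi b|c|)^{n}}{n!}=e^{\pi b|c|}\|f\|_{BMO},\]
where interchanging the sum with the oscillation integral over $I$ is justified by absolute convergence of the bounding series.

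The main obstacle will be justifying the Riesz formula in the setting of $PW_{b}^{\star}$, since the classical derivation assumes $f\in PW_{b}^{\infty}$. Here $f$ is only in BMO and may grow on the real line, so one needs to verify that $\sum_{k}|f(t+s_{k})|/(k+1/2)^{2}$ converges and that the formula itself is valid. The tail condition $\int|f(t)|/(t^{2}+1)\,dt<\infty$ (automatic for BMO functions, and built into $PW_{b}^{+}\supset PW_{b}^{\star}$) should control the growth sufficiently; the formula can then be established either by a contour-integration argument in the spirit of the proof of Theorem \ref{ThmSampSeries2}, or by approximating $f$ with bandlimited functions of additional decay and passing to the limit using the lower semicontinuity of the BMO seminorm.
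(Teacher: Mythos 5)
Your argument is sound in outline but takes a genuinely different route from the paper. The paper's proof mimics the standard proof of the Plancherel--P\'olya inequality: for fixed $r$ it forms the oscillation average
\[
R_{\epsilon}^{\pm}(z)=e^{\mp(\pi b+\epsilon)\mathrm{Im}(z)}\frac{1}{2r}\int_{-r}^{r}\left|f(z+t)-\frac{1}{2r}\int_{-r}^{r}f(z+s)\,ds\right|dt,
\]
observes that this is subharmonic (an average of moduli of entire functions multiplied by $|e^{\pm i(\pi b+\epsilon)z}|$), is bounded by $\left\Vert f\right\Vert _{BMO}$ on $\mathbb{R}$ and suitably small at $i\infty$, and applies the Phragm\'en--Lindel\"of principle in each half-plane before taking suprema over $z$ and $r$ and letting $\epsilon\to0$. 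That argument is shorter and needs no interpolation formula. Your route, via the Taylor expansion in the imaginary direction together with the BMO Bernstein inequality $\left\Vert f^{(n)}\right\Vert _{BMO}\leq(\pi b)^{n}\left\Vert f\right\Vert _{BMO}$, also produces the sharp constant $e^{\pi b|c|}$, and has the side benefit of yielding that Bernstein inequality (a quantitative relative of Theorem \ref{ThmPWStar}(IV)) along the way. The term-by-term triangle inequality and the interchange of the Taylor sum with the oscillation integral over a fixed compact interval are unproblematic, since the Taylor series of an entire function converges absolutely and locally uniformly.

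The one place where real work is deferred is the validity of the Riesz interpolation formula for $f\in PW_{b}^{\star}$, and this is fillable but not free. Both the absolute convergence of $\sum_{k}|f(t+s_{k})|/(k+1/2)^{2}$ and the vanishing of the contour integral of $f(t+w)/(w^{2}\cos(\pi bw))$ over large circles need a quantitative growth bound: the condition $\int|f(t)|/(1+t^{2})\,dt<\infty$ by itself only gives boundedness, not decay, of the circle integrals. What saves you is the logarithmic average growth of BMO functions, namely $\frac{1}{|I|}\int_{I+s}|f|\lesssim|f_{I}|+\left\Vert f\right\Vert _{BMO}\log(2+|s|)$ and hence $\int_{|t|\leq R}|f|\lesssim R\log R$; with this, the contour integral is $O(r^{-1}\log r)$ along suitable radii and the formula follows from the residue computation. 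You should make that estimate explicit. I would also be wary of your alternative route via approximation by bandlimited functions with extra decay: multiplying $f$ by a bounded factor such as $(\sin(\delta z)/(\delta z))^{2}$ does not in general control the BMO seminorm of the product by that of $f$, since BMO is not stable under multiplication by bounded functions, so the contour-integration option is the safer of the two you propose.
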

\begin{proof}
The proof is similar to the $PW_{b}^{p}$ case described in \cite{Se04}.
Define\[
R_{\epsilon}^{\pm}(z)=e^{\mp(\pi b+\epsilon)\mathrm{Im}(z)}\frac{1}{2r}\intop_{-r}^{r}\left|f(z+t)-\frac{1}{2r}\intop_{-r}^{r}f(z+s)ds\right|dt,\]
for complex $z$ and real $r.$ For each $\epsilon>0$, $R_{\epsilon}^{+}$
is a subharmonic function satisfying $\left|R_{\epsilon}^{+}(z)\right|\leq\left\Vert f\right\Vert _{BMO}$
for $z\in\mathbb{R}$ and $\max(\log|R_{\epsilon}^{+}(z)|,0)\to0$
as $z\to i\infty$. Applying the Phragmen-Lindelöf principle over
$\mathbb{C}^{+}$gives $|R_{\epsilon}^{+}(z+ic)|\leq\left\Vert f\right\Vert _{BMO}e^{(\pi b+\epsilon)|c|}$
for $c\geq0$, and we can repeat the argument with $R_{\epsilon}^{-}$
and $\mathbb{C}^{-}$ for $c<0$. Taking the supremum over real $z$
and $r$ and letting $\epsilon\rightarrow0$ gives the inequality.
\end{proof}
We will now establish several basic properties of $PW_{b}^{\star}$.
\begin{thm}
\label{ThmPWStar}Let $f\in PW_{b}^{\text{\ensuremath{\star}}}$.
Then the following statements hold.

I: For each $c\in\mathbb{R}$, $f(\cdot+ic)$ is uniformly Lipschitz
continuous on $\mathbb{\mathbb{R}}$.

II: For any fixed numbers $c$ and $c'$,$\left\Vert f\right\Vert _{BMO,c}\eqsim\left\Vert f\right\Vert _{BMO,c'}$.

III: For any given $z\in\mathbb{C}$, the point evaluation functional
$z\rightarrow f(z)$ is bounded on \textup{$PW_{b}^{\text{\ensuremath{\star}}}$}.

IV: $\left\Vert f'\right\Vert _{L^{\infty}}\lesssim\left\Vert f\right\Vert _{BMO}$.\end{thm}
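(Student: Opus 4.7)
The four claims are not independent: once IV is proved, statements I, II, and III all follow from IV combined with the $L^{\infty}$ Plancherel-Polya inequality applied to $f'$. My plan is therefore to prove IV first as the core step and then derive the remaining three.

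To prove IV, I express $f'$ as a convolution against a mean-zero Schwartz kernel and invoke Fefferman's duality theorem. Since $f\in BMO$ forces $\int_{-\infty}^{\infty}|f(t)|/(t^{2}+1)\,dt<\infty$, $f$ is a tempered distribution of exponential type $\leq\pi b$, so Paley-Wiener-Schwartz gives $\mathrm{supp}(\hat{f})\subset[-b/2,b/2]$. Pick a real Schwartz function $K$ with $\hat{K}\equiv 1$ on $[-b/2,b/2]$; then $f=f\star K$ pointwise, and differentiating under the integral sign yields
\[
f'(t_{0})=\int_{-\infty}^{\infty}f(s)\,K'(t_{0}-s)\,ds,
\]
where $K'$ is Schwartz with $\int K'=0$. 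The translated kernel $g_{t_{0}}(s):=K'(t_{0}-s)$ lies in $\mathbf{U}_{2}$: it is Schwartz with mean zero, and expanding $1/(s-t)$ in powers of $1/t$ gives $\mathcal{H}g_{t_{0}}(t)=O(t^{-2})$ at infinity, so $(1+t^{2})|\mathcal{P}^{+}g_{t_{0}}(t)|$ is bounded. Fefferman's duality theorem then yields
\[
|f'(t_{0})|\leq C\|f\|_{BMO}\,\|g_{t_{0}}\|_{H^{1}(\mathbb{R})}=C\|f\|_{BMO}\,\|K'\|_{H^{1}(\mathbb{R})},
\]
where the last equality uses translation-invariance of the $H^{1}$ norm. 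Taking the supremum over $t_{0}\in\mathbb{R}$ establishes IV.

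With IV in hand, $f'\in PW_{b}^{\infty}$, and the $L^{\infty}$ Plancherel-Polya inequality gives $\|f'(\cdot+ic)\|_{L^{\infty}}\leq e^{\pi b|c|}\|f'\|_{L^{\infty}}$, immediately proving I. For II, integration of $f'$ along the real axis yields $|f(c)-f(c')|\lesssim|c-c'|\,\|f\|_{BMO}$, from which $\|f\|_{BMO,c}\eqsim\|f\|_{BMO,c'}$ follows by the triangle inequality (with constants depending on $|c-c'|$). For III, fix $z=u+iv$ and integrate $f'$ along the polygonal path $c\to c+iv\to u+iv$; on each segment $|f'|$ is bounded by $e^{\pi b|v'|}\|f'\|_{L^{\infty}}$ via Plancherel-Polya, giving $|f(z)|\lesssim_{z}|f(c)|+\|f\|_{BMO}\lesssim_{z}\|f\|_{BMO,c}$ with constants depending on $z$.

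The main obstacle is IV, and specifically verifying that the translated Schwartz kernel really sits in $\mathbf{U}_{2}$ with $H^{1}$ norm uniform in $t_{0}$; translation-invariance of $H^{1}$ handles the uniformity. An equivalent self-contained route would be a direct dyadic-annulus estimate of $|\int K'(t_{0}-s)(f(s)-f_{I_{0}})\,ds|$, exploiting the logarithmic control $|f_{I_{n}}-f_{I_{0}}|\lesssim n\|f\|_{BMO}$ on successive doublings of an interval $I_{0}$ around $t_{0}$, combined with the rapid decay of $K'$.
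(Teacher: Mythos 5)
Your proof is correct, but it runs in the opposite direction from the paper's and uses a different kernel. The paper proves statement I first: it builds the explicit rational--trigonometric kernel $K(c,t)=\frac{|c|}{\pi t(t-c)}\sin(\frac{2\pi N}{c}t)$, shows by a residue computation that $\int f(t)K(c,t)\,dt=f(c)-f(0)$ for every $f\in PW_b^{+}$, computes $\mathcal{H}K(c,\cdot)$ in closed form, and estimates $\Vert K(c,\cdot)-K(c',\cdot)\Vert_{H^1(\mathbb{R})}\lesssim|c-c'|$ by hand; Fefferman duality then gives the difference-quotient bound $|f(c)-f(c')|\lesssim|c-c'|\,\Vert f\Vert_{BMO}$ directly, IV follows by letting $c'\to c$, and the extension to horizontal lines uses the separate $BMO$ Plancherel--Polya lemma (Lemma \ref{ThmPPBMO}). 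You instead prove IV first, via the identity $f'=f\star K'$ for a Schwartz low-pass kernel $K$ and Fefferman duality against $g_{t_0}=K'(t_0-\cdot)\in\mathbf{U}_2$, with uniformity in $t_0$ free from translation invariance of the $H^1$ norm; I, II, III then follow by integrating $f'$ and the ordinary $L^\infty$ Plancherel--Polya inequality, so you never need Lemma \ref{ThmPPBMO} here. Your route is shorter and more conceptual (it is the standard ``Bernstein inequality in $BMO$'' argument), while the paper's stays entirely within its contour-integral framework for $PW_b^{+}$ and avoids distributional manipulations, and its kernel hands you the difference quotient, which is exactly what II and III consume. Two details you should tighten: choose $\hat K\equiv1$ on a \emph{neighborhood} of $[-b/2,b/2]$ rather than on the interval itself, since a cutoff equal to $1$ only on the support of a distribution need not act as the identity on it (think of $\delta'$ at an endpoint); and say a word about why the distributional identity $f=f\star K$ upgrades to an absolutely convergent pointwise integral, which follows from $\int|f(t)|(1+t^2)^{-1}dt<\infty$ together with the rapid decay of $K$ and the continuity of both sides.
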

\begin{proof}
We set $b=1$ without loss of generality. We can prove all of the
above statements by using the reproducing kernel-like function\[
K(c,t)=\frac{|c|}{\pi t(t-c)}\sin\left(\frac{2\pi N}{c}t\right),\]
where $c\in\mathbb{R}\backslash\{0\}$ and $N$ is any integer greater
than $|c|$. As a function of $t$, $K(c,t)$ is entire and satisfies
$2\pi\leq\mathrm{type}(K)<\infty$. For any $f\in PW_{1}^{+}$,\[
\int_{-\infty}^{\infty}f(t)K(c,t)dt=f(c)-f(0).\]
This can be seen by observing that for $\eta=\pm1$, the function
$\frac{c}{z(z-c)}\exp\left(2\pi i\eta Nz/c\right)f(z)$ has poles
at $c$ and $0$ with respective residues $\eta f(c)$ and $-\eta f(0)$.
The estimation argument is very similar to the proof of Theorem \ref{ThmSampSeries2},
and we omit the details.\\

We now suppose that $f\in PW_{1}^{\star}$. We want to approximate
the $H^{1}(\mathbb{R})$ norm of $K(c,t)-K(c',t)$, where $c\geq1$
and $c'\geq1$. We first integrate the function $\frac{1}{\pi(z-s)}\frac{c}{z(z-c)}\exp\left(2\pi i\eta Nz/c\right)$,
where $s\in\mathbb{R}\backslash\{0,c\}$, and perform the same kind
of calculation as before to find that\begin{eqnarray*}
\mathcal{H}K(c,s) & = & -\frac{1}{\pi s}-\frac{1}{\pi(c-s)}+\frac{c\exp(2\pi iNs/c)}{2\pi s(c-s)}+\frac{c\exp(-2\pi iNs/c)}{2\pi s(c-s)}\\
 & = & \frac{c\left(\cos(2\pi Ns/c)-1\right)}{\pi s(c-s)}.\end{eqnarray*}
Let $N=\max(\lceil c\rceil,\lceil c'\rceil)$ and define the interval
$I^{w}:=[w-\frac{1}{2},w+\frac{1}{2}]$. We first consider the case
where $1\leq c\leq\frac{3}{2}$ and $|c-c'|>\frac{1}{2}$. Recalling
that $T$ is the tent function (\ref{Tent}), we have\begin{eqnarray*}
 &  & \left\Vert K(c,\cdot)-K(c',\cdot)\right\Vert _{L^{1}}\\
 & \leq & \int_{-\infty}^{\infty}\sum_{n=-\infty}^{\infty}\left(\frac{2cT(2Nt/c+n)}{\pi\left|t(t-c)\right|}+\frac{2c'T(2Nt/c'+n)}{\pi\left|t(t-c')\right|}\right)dt\\
 & \leq & \frac{16N}{\pi}+\int_{\mathbb{R}\backslash(I^{0}\bigcup I^{c})}\frac{2c}{\pi\left|t(t-c)\right|}dt+\int_{\mathbb{R}\backslash(I^{0}\bigcup I^{c'})}\frac{2c'}{\pi\left|t(t-c')\right|}dt\\
 & \lesssim & |c-c'|.\end{eqnarray*}

\noindent Now suppose that $1\leq c\leq\frac{3}{2}$ and $|c-c'|\leq\frac{1}{2}$,
so that $N=2$. Some elementary estimates show that\begin{eqnarray*}
 &  & \left\Vert K(c,\cdot)-K(c',\cdot)\right\Vert _{L^{1}}\\
 & \lesssim & \int_{-1/2}^{1/2}\left|\frac{4}{c}-\frac{4}{c'}\right|dt+\int_{1/2}^{5/2}\max\left(\left|\frac{4}{c}-\frac{\sin(4\pi c/c')}{\pi(c-c')}\right|,\left|\frac{4}{c'}-\frac{\sin(4\pi c'/c)}{\pi(c'-c)}\right|\right)dt+\\
 &  & \quad\int_{\mathbb{R}\backslash(-1/2,5/2)}|c-c'||t|^{-3/2}dt\\
 & \lesssim & |c-c'|.\end{eqnarray*}

\noindent Following the same arguments, we can also obtain the bound
$\Vert\mathcal{H}K(c,\cdot)-\mathcal{H}K(c',\cdot)\Vert_{L^{1}}\lesssim|c-c'|$
for the above choices of $c$ and $c'$. By Fefferman's duality theorem
and the fact that $K(c,\cdot)-K(c',\cdot)\in\mathbf{U}_{2}$, we have\begin{equation}
\left\Vert f\right\Vert _{BMO}\gtrsim\frac{1}{\left\Vert K(c,\cdot)-K(c',\cdot)\right\Vert _{H^{1}(\mathbb{R})}}\left|\int_{-\infty}^{\infty}f(t)(K(c,t)-K(c',t))dt\right|\gtrsim\left|\frac{f(c)-f(c')}{c-c'}\right|,\label{PWStarLip}\end{equation}
where the constant in the inequality is independent of $c$ and $c'$.
Since the $BMO$ seminorm is translation-invariant, the inequality
(\ref{PWStarLip}) actually holds for all $c,c'\in\mathbb{R}$. Combining
this with Lemma \ref{ThmPPBMO} proves (I) and letting $c'\to c$
gives (IV). If we fix $R=|c-c'|>0$, this also shows that $\left\Vert f\right\Vert _{BMO}+|f(c)|\gtrsim|f(c')|$,
where the implied constant depends on $R$, and we can interchange
$c$ and $c'$ to get (II). Finally, the statement (III) is just (II)
phrased in a different way.\end{proof}
\begin{rem*}
The closure of the set of uniformly continuous $BMO$ functions under
the $BMO$ seminorm is called $VMO$, for vanishing mean oscillation.
Theorem \ref{ThmPWStar} (I) shows that $PW_{b}^{\star}\subset VMO$.
Note that there are two non-equivalent definitions of $VMO$ in the
literature, and we use the one given in \cite{Ga07}.
\end{rem*}
~
\begin{rem*}
Theorem \ref{ThmPWStar} (IV) is a sharper form of the $p=\infty$
case of Bernstein's inequality. We mention that the opposite inequality
does not generally hold (even if $\left\Vert f\right\Vert _{BMO}$
is replaced by $\left\Vert f\right\Vert _{BMO,c}$), and there are
functions $f$ such that $f'\in PW_{b}^{\infty}$ but $f\not\in PW_{b}^{\text{\ensuremath{\star}}}$.\end{rem*}
\begin{cor}
\label{ThmUnbound}Let $f\in PW_{b}^{\star}$. Then either $f\in PW_{b}^{\infty}$
or there is no separated sequence $X$ with $D^{-}(X)>0$ such that
$f(X)\in l^{\infty}$.\end{cor}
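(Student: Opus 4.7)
The plan is to establish the contrapositive: if $f \in PW_b^{\star}$ and there exists a separated sequence $X$ with $D^{-}(X) > 0$ for which $f(X) \in l^{\infty}$, then $f \in PW_b^{\infty}$. The two ingredients I would use are both already in place: Theorem \ref{ThmPWStar}(IV) tells us $\|f'\|_{L^{\infty}} \lesssim \|f\|_{BMO} < \infty$, so $f$ is globally Lipschitz on $\mathbb{R}$ with some constant $L$, and the hypothesis $D^{-}(X) > 0$ should be translated into a uniform bound on the gaps $x_{k+1} - x_k$.

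First I would verify the gap bound. If the gaps were not uniformly bounded, then for every $r > 0$ there would exist an interval of length $r$ disjoint from $X$, so $\inf_a N(X,[a,a+r))/r$ would take the value $0$ for arbitrarily large $r$, which is incompatible with $D^{-}(X) = \liminf_{r \to \infty} \inf_a N(X,[a,a+r))/r > 0$. Hence $M := \sup_k (x_{k+1} - x_k) < \infty$.

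Next I would conclude the argument by a one-line Lipschitz estimate. For every $t \in \mathbb{R}$ there is some $x_k \in X$ with $|t - x_k| \leq M$, so
\[
|f(t)| \leq |f(x_k)| + L|t - x_k| \leq \|f(X)\|_{l^{\infty}} + LM,
\]
giving $f \in L^{\infty}$. Combined with $\mathrm{type}(f) \leq \pi b$ from the definition of $PW_b^{\star}$, this lands $f$ in $PW_b^{\infty}$, completing the contrapositive.

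There is no serious obstacle here; the corollary really is a short consequence of Theorem \ref{ThmPWStar}(IV) combined with the observation that positive lower density forces bounded gaps. The one place that warrants a careful sentence is the gap-boundedness deduction, since it is the only point where the density hypothesis is used in a non-obvious way — but the argument is a direct unpacking of the $\liminf$ defining $D^{-}(X)$.
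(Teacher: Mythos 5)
Your proposal is correct and follows essentially the same route as the paper: both arguments convert $D^{-}(X)>0$ into a uniform bound on the gaps of $X$ and then apply the Lipschitz bound $\left\Vert f'\right\Vert _{L^{\infty}}\lesssim\left\Vert f\right\Vert _{BMO}$ from Theorem \ref{ThmPWStar} (IV) to bound $f$ between consecutive sample points. Your explicit unpacking of the density hypothesis into bounded gaps is a detail the paper states without proof, but the substance is identical.
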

\begin{proof}
Suppose we have a separated $X=\{x_{k}\}$ with $D^{-}(X)>0$ and
$f(X)\in l^{\infty}$. This means that for some large fixed $r$,
every real interval $I$ of length $r$ contains a point $x_{n}\in X$.
Theorem \ref{ThmPWStar} (IV) then shows that for any $t\in I$,

\[
\left|f(t)\right|=\left|f(x_{n})+\int_{x_{n}}^{t}f'(u)du\right|\lesssim\left|f(x_{n})\right|+r\left\Vert f\right\Vert _{BMO}.\]

\end{proof}
Intuitively, Corollary \ref{ThmUnbound} says that an unbounded $PW_{b}^{\star}$
function is large in most places on the real line. It also shows that
the bandlimited interpolation of bounded data $\tilde{A}\in l^{\infty}$
can never be in $PW_{b}^{\star}$ unless it is actually in $PW_{b}^{\infty}$.
This occurs in spite of Lemma \ref{ThmRandP2} and highlights a basic
difference between $PW_{b}^{\star}$ and $PW_{b}^{p}$, $1<p<\infty$.
In Lemma \ref{ThmRandP2}, we generally cannot remove the factor $\frac{1}{S(\cdot+ic)}$
from the inequality and conclude that $f\in BMO$. In contrast, for
$A\in l^{p}$, the series (\ref{SampSeries1}) can be used to find
that $\frac{f(\cdot+ic)}{S(\cdot+ic)}\in L^{p}$ (see \cite{Le96}),
which clearly implies $f(\cdot+ic)\in L^{p}$ and thus $f\in L^{p}$.\\

We finally study an example of an unbounded $PW_{b}^{\star}$ function
that illustrates the {}``largeness'' property described above.
\begin{example*}
\label{ExG3}The function $G_{3}(z)=\sum_{k=0}^{\infty}(-1)^{k}\sin\left(\frac{\pi z}{3\cdot2^{k}}\right)$
is in $PW_{1/3}^{\star}\backslash PW_{1/3}^{\infty}$.
\end{example*}
To see this, we use the identity $\sin z=\frac{1}{2i}\left(e^{iz}-e^{-iz}\right)$
to write $G_{3}=G_{3+}+G_{3-}$, where $P(w,\cdot)\star G_{3\pm}=G_{3\pm}(w)$
for $w\in\mathbb{C}^{\pm}$, and then apply the Fefferman-Stein theorem
(\ref{FSThm}) to each part. Let $w=u+iv$. We first note that by
analyticity,\[
\left|\nabla(P(w,\cdot)\star G_{3+})\right|^{2}=\left|\nabla G_{3+}(u+iv)\right|^{2}=2|G_{3+}'(w)|^{2}.\]

\noindent Since $\left|\sin\frac{\pi z}{3\cdot2^{k}}\right|\leq\left|\frac{\pi z}{3\cdot2^{k}}\right|$
for large $k$, the series defining $G_{3}$ converges uniformly on
compact sets, so we have\begin{eqnarray*}
\mathcal{N}\left(2v\left|G_{3+}'(w)\right|^{2}dudv\right) & = & \mathcal{N}\left(2v\left|\frac{d}{dw}\frac{1}{2i}\sum_{k=0}^{\infty}e^{\frac{\pi i}{3}2^{-k}w}\right|^{2}dudv\right)\\
 & \leq & \mathcal{N}\left(2ve^{-\frac{2\pi}{3}v}\left(\sum_{k=0}^{\infty}\frac{\pi}{3\cdot2^{k}}\right)^{2}dudv\right)\\
 & \leq & 2.\end{eqnarray*}

\noindent Doing the same calculation with $G_{3-}$, we find that
$G_{3}\in PW_{1/3}^{\star}$. On the other hand, $G_{3}$ satisfies
the identity $G_{3}(2z)=\sin(\frac{2\pi z}{3})-G_{3}(z)$. This implies
that for integer $n\geq2$,\begin{eqnarray*}
G_{3}(2^{n}) & = & (-1)^{n}g_{3}(1)+\sum_{k=0}^{n-1}(-1)^{n-k}\sin\left(\frac{\pi2^{k}}{3}\right)\\
 & = & (-1)^{n}\left(g_{3}(1)-\frac{\sqrt{3}}{2}(n-2)\right),\end{eqnarray*}

\noindent so $G_{3}\not\in PW_{1/3}^{\infty}$. By Corollary \ref{ThmUnbound},
the samples $G_{3}(X)$ are unbounded for any separated sequence $X$
with $D^{-}(X)>0$. It is interesting to note that such a function
can still be bounded on a sequence $X$ that is {}``very sparse''
in the sense that $D^{-}(X)=0$. It is easy to check that $G_{3}(3\cdot2^{n})=(-1)^{n}G_{3}(3)$
and $G_{3}(-z)=-G_{3}(z)$, so $G_{3}(X)\in l^{\infty}$ for the sequence
$x_{n}=3\cdot2^{n}\mathrm{sign}(n)$. Some graphs of $G_{3}$ are
shown in Figure \ref{FigG3} below.

\begin{figure}[H]
\subfloat{\includegraphics[scale=0.75]{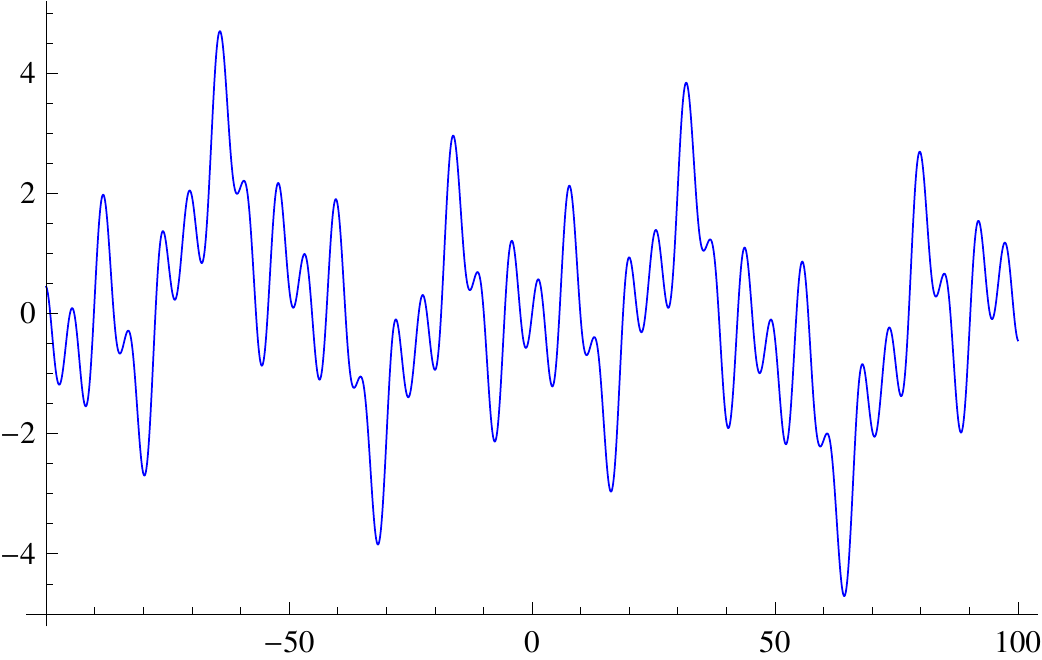}}~~~~~\subfloat{\includegraphics[scale=0.75]{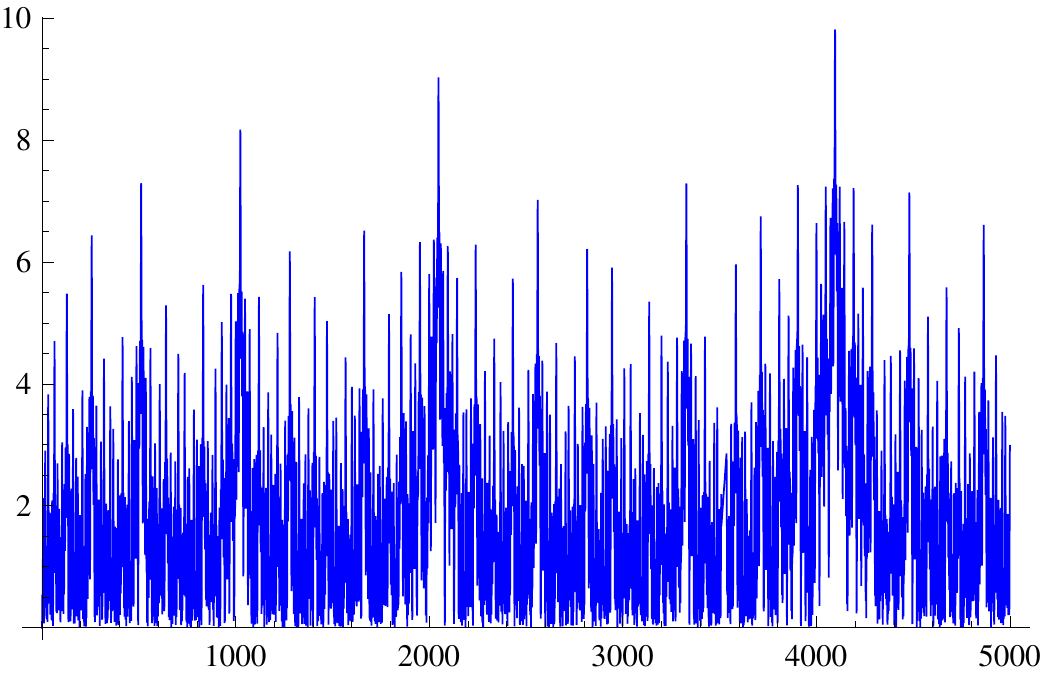}}

\caption{\label{FigG3}Left: The function $G_{3}(z)$ on $[-100,100]$. Right:
The absolute value of $G_{3}(z)$ on $[0,5000]$. The peaks at powers
of $2$ are clearly visible, as well as a self-similarity effect at
different scales.}

\end{figure}

\begin{acknowledgement*}
The author would like to thank Professor Ingrid Daubechies for many
valuable discussions in the course of this work.
\end{acknowledgement*}
\bibliographystyle{plain}
\bibliography{BLI}

\end{document}